\newtheoremstyle{localthm}
	{5pt} % space above
	{5pt} % space below
	{\sl} % Body font
	{} % Indent amount
	{\bf} % Theorem head font
	{{\rm.}} % Punctuation after theorem head
	{.7em} % Space after theorem head
	{} % Theorem head spec ?
\theoremstyle{localthm}
\newtheorem{Theorem}{Theorem}%[section]
\newtheorem{Proposition}{Proposition}
\newtheorem{Corollary}[Proposition]{Corollary}
\newtheoremstyle{localrem}
	{5pt} % space above
	{5pt} % space below
	{\rm} % Body font
	{} % Indent amount
	{\bf} % Theorem head font
	{{\rm.}} % Punctuation after theorem head
	{.7em} % Space after theorem head
	{} % Theorem head spec ?
\theoremstyle{localrem}
\newtheorem{Example}{Example}
\def\eps{\varepsilon}
\def\Ex{\operatorname{\mathbb{E}}}
\def\Pr{\operatorname{\mathbb{P}}}
\def\op{o_{\mathrm{p}}}
\def\Op{O_{\mathrm{p}}}
\def\top{\to_{\mathrm{p}}}
\def\Fnhat{\hat{F}_n}
\def\fnhat{\hat{f}_n}
\def\Pnhat{\hat{P}_n}
\def\phinhat{\hat{\varphi}_n}
\def\munhat{\hat{\mu}_n}
\def\Mnhat{\hat{M}_n}
\def\Wnhat{\hat{W}_n}
\def\Fnhatemp{\hat{F}_n^{\mathrm{emp}}}
\def\Pnhatemp{\hat{P}_n^{\mathrm{emp}}}
\def\munhatemp{\hat{\mu}_n^{\mathrm{emp}}}
\def\Mnhatemp{\hat{M}_n^{\mathrm{emp}}}
\def\Wnhatemp{\hat{W}_n^{\mathrm{emp}}}
\def\Snhat{\hat{\mathcal{S}}_n}
\def\R{\mathbb{R}}
\def\d{\mathrm{d}}
\def\AA{\mathcal{A}}
\def\DD{\mathcal{D}}
\begin{document}
%%%%%%%%%%%%%%%%

\addtolength{\baselineskip}{+.2\baselineskip}

\title{On the tails of log-concave density estimators}

\author{Didier B.\ Ryter and Lutz D{\"u}mbgen\\
University of Bern}

\date{\today}

\maketitle

\paragraph{Abstract.}
It is shown that the nonparametric maximum likelihood estimator of a univariate log-concave probability density satisfies desirable consistency properties in the tail regions. Specifically, let $P$ and $f$ denote the true underlying distribution and density, respectively. If $\fnhat$ is the estimated log-concave density, and $\phinhat = \log \fnhat$, then we specify sequences $(b_n)_{n\in \mathbb{N}}$ such that $P([b_n,\infty)) \to 0$ at a specific speed, ensuring that the absolute errors or absolute relative errors of $\fnhat, \ \phinhat$ and $\phinhat'$ converge to zero uniformly on sets $[a, b_n]$. The main tools, besides characterizations of $\fnhat$, are exponential and maximal inequalities for truncated moments of log-concave distributions, which are of independent interest.

\paragraph{AMS subject classification:}
62G05, 62G07, 62G20, 62G32

\paragraph{Key words:}
Chernov bounds, consistency, exponential inequalities, log-linear densities.

%=======================
\section{Introduction}
\label{sec:Introduction}
%=======================

Let $f$ be a log-concave probability density on the real line, that is, $f(x) = \exp(\varphi(x))$ for some concave and upper semicontinuous function $\varphi : \R \to [-\infty,\infty)$. Suppose we observe independent random variables $X_1, \ldots, X_n$ with density $f$ and corresponding distribution function $F$. As noted by \cite{Walther_2002} and \cite{Pal_etal_2006}, for any sample size $n \ge 2$, there exists a unique maximum-likelihood estimator (MLE) $\fnhat = \exp(\phinhat)$ of $f$, where the MLE $\phinhat$ of $\varphi$ maximizes
\[
	\sum_{i=1}^n \psi(X_i) 
\]
over all concave functions $\psi : \R \to [-\infty,\infty)$ such that $\int e^{\psi(x)} \, \d x = 1$. Denoting the order statistics of $X_1,\ldots,X_n$ with $X_{(1)} < \cdots < X_{(n)}$, this estimator $\phinhat$ is piecewise linear on $[X_{(1)}, X_{(n)}]$ with changes of slope only at observations, and $\phinhat = -\infty$ outside of $[X_{(1)}, X_{(n)}]$.

Concerning consistency of $\phinhat$, let $\{x \in \R : 0 < F(x) < 1\} =: (a_o,b_o)$ with $-\infty \le a_o < b_o \le \infty$. It was shown by \cite{Duembgen_Rufibach_2009} that for any fixed interval $[a,b] \subset (a_o,b_o)$, the supremum of $|\phinhat - \varphi|$ over $[a,b]$ is of order $\Op(\rho_n^{1/3})$, where $\rho_n := \log(n)/n$. If $\varphi$ is H\"older-continuous on a neighborhood of $[a,b]$ with exponent $\beta \in (1,2]$, this rate improves to $\Op(\rho_n^{\beta/(2\beta+1)})$. Uniform consistency of $\phinhat$ on arbitrary compact subintervals of $(a_o,b_o)$ implies that $\int \bigl| \fnhat(x) - f(x) \bigr| \, \d x \top 0$. (Throughout this paper, asymptotic statements refer to $n \to \infty$.) Pointwise limiting distributions at a single point $x_o \in (a_o,b_o)$ have been derived by \cite{Balabdaoui_etal_2009}, assuming that $\varphi$ is twice continuously differentiable in a neighborhood of $x_o$. \cite{Kim_Samworth_2016} showed that the expected squared Hellinger distance between $\fnhat$ and $f$ is of order $O(n^{-4/5})$. Numerous further results about $\phinhat$ and $\fnhat$ have been derived thereafter, including multivariate settings, see the review of \cite{Samworth_2018}. In the present univariate setting, fast algorithms for the computation of $\phinhat$ and related objects are provided by \cite{Duembgen_Rufibach_2011} and \cite{Duembgen_etal_2021}. Experiments with simulated data show that even in the tail regions, that is, close to $a_o$ and $b_o$, the estimator $\phinhat$ is surprisingly accurate. In view of this empirical finding, \cite{Mueller_Rufibach_2009} developed new estimators for extreme value analysis with excellent empirical performance. However, the currently available theory about the asymptotic properties of $\phinhat$ does not explain its good performance in the tail regions.

In what follows, several results about $\phinhat(x)$ and the right-sided derivative $\phinhat'(x+)$ for $x$ close to $b_o$ are derived. By symmetry, these findings carry over to results in the left tail region. The main results are presented in Section~\ref{sec:Results} while Sections~\ref{sec:Auxiliary} and \ref{sec:Proofs} provide the proofs. Moreover, the results in Section~\ref{subsec:exp} are of independent interest.

%=====================
\section{Main results}
\label{sec:Results}
%=====================

We start with a simple consequence of the pointwise consistency of $\fnhat$, $\phinhat$ and concavity of $\varphi, \phinhat$.

\begin{Theorem}
\label{thm0}
The estimator $\fnhat$ does not overestimate $f$ in the sense that
\[
	\sup_{x \in \R} \, \bigl( \fnhat(x) - f(x) \bigr)^+ \ \top \ 0 .
\]
Moreover, for any sequence $(b_n)_n$ in $(a_o,b_o)$ with limit $b_o$,
\[
	\phinhat'(b_n+) \
	\begin{cases}
		\le \ \varphi'(b_o-) + \op(1)
			& \text{if} \ \varphi'(b_o-) > -\infty , \\
		\top \ -\infty
			& \text{if} \ \varphi'(b_o-) = -\infty ,
	\end{cases}
\]
where $\phinhat'(x+) := -\infty$ for $x \ge X_{(n)}$.
\end{Theorem}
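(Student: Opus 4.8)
The plan is to treat the three assertions in turn, using pointwise consistency of $\fnhat$ and $\phinhat$ on compacta (the $\Op(\rho_n^{1/3})$ rate from \cite{Duembgen_Rufibach_2009}) together with concavity as the only real inputs.

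For the first assertion, fix $\eps > 0$. On any compact $[a,b] \subset (a_o,b_o)$ we have $\sup_{[a,b]} (\fnhat - f)^+ \top 0$ by uniform consistency, so it suffices to control $(\fnhat - f)^+$ outside a large compact set. I would choose $[a,b]$ so that $f < \eps$ on $\R \setminus [a,b]$ (possible since $f$ is a density, hence $f(x) \to 0$ as $x \to b_o$ and as $x \to a_o$ — strictly speaking one uses that a log-concave density is bounded and tends to $0$ at the edges of its support). The key point is that $\phinhat$ is concave and $\phinhat = -\infty$ off $[X_{(1)}, X_{(n)}]$, so on $[b, X_{(n)}]$ the function $\phinhat$ lies below the line through $(b, \phinhat(b))$ with slope $\phinhat'(b+)$; combined with $\fnhat(b) = e^{\phinhat(b)} \to f(b)$ and a crude bound on $\phinhat'(b+)$ one gets $\fnhat \le f(b) + \op(1) \le \eps + \op(1)$ on $[b, X_{(n)}]$, and symmetrically on the left; outside $[X_{(1)}, X_{(n)}]$ we have $\fnhat = 0$. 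Hence $\sup_\R (\fnhat - f)^+ \le \eps + \op(1)$, and letting $\eps \downarrow 0$ gives the claim. The mild obstacle here is making "a crude bound on $\phinhat'(b+)$" precise; one convenient route is to note that concavity and $\int \fnhat = 1$ force $\phinhat'(b+) \le$ some fixed constant depending only on $f(b)$ and $\phinhat(b')$ for a second point $b' \in (a,b)$, all of which are $\Op(1)$.

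For the derivative statements, consider the two cases. Suppose first $\varphi'(b_o-) =: m \in (-\infty, \infty)$. Pick points $b < c$ in $(b_n, b_o)$ eventually (possible since $b_n \to b_o$, after passing to the tail of the sequence; if $b_n \ge c$ one instead picks $c$ even closer to $b_o$ — the argument is that for each fixed $c < b_o$, eventually $b_n < c$). By concavity of $\phinhat$, the slope $\phinhat'(b_n+)$ is at most the chord slope $(\phinhat(c) - \phinhat(b_n))/(c - b_n)$, wait — that inequality goes the wrong way; rather, concavity gives $\phinhat'(b_n+) \le \phinhat'(b+)$ only for $b \le b_n$. So instead I would use: for fixed $b < c$ with $b_n \le b$ eventually, concavity yields $\phinhat'(b_n+) \le (\phinhat(c)-\phinhat(b))/(c-b)$, and the right-hand side converges in probability to $(\varphi(c)-\varphi(b))/(c-b)$ by pointwise consistency of $\phinhat$ at the fixed points $b,c$. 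Now let $b, c \uparrow b_o$: by concavity of $\varphi$, $(\varphi(c)-\varphi(b))/(c-b) \downarrow \varphi'(b_o-) = m$. A diagonal argument over a sequence $b = b^{(k)}, c = c^{(k)} \uparrow b_o$ then gives $\phinhat'(b_n+) \le m + \op(1)$.

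When $\varphi'(b_o-) = -\infty$, the same chord bound applies: for fixed $b < c < b_o$ (and $n$ large enough that $b_n \le b$), $\phinhat'(b_n+) \le (\phinhat(c)-\phinhat(b))/(c-b) \top (\varphi(c)-\varphi(b))/(c-b)$, and now $(\varphi(c)-\varphi(b))/(c-b) \to -\infty$ as $c \uparrow b_o$ with $b$ fixed, because $\varphi(c) \to \varphi(b_o-) \le \varphi(b) + (b_o - b)\varphi'(b+)$ but the slopes blow up — more carefully, concavity gives $(\varphi(c)-\varphi(b))/(c-b) \le \varphi'(b+)$ and this upper envelope tends to $-\infty$ as $b \uparrow b_o$. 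So for any $K > 0$ choose $b < c$ with the fixed chord slope below $-K$; then eventually $\phinhat'(b_n+) \le -K + \op(1) \le -K/2$ with probability tending to one, giving $\phinhat'(b_n+) \top -\infty$. Throughout, the case $b_n \ge X_{(n)}$ (where $\phinhat'(b_n+) = -\infty$ by convention) only helps. The main obstacle, such as it is, will be bookkeeping the order of quantifiers — $n \to \infty$ versus $b, c \uparrow b_o$ — cleanly enough that the $\op(1)$ terms are genuinely uniform once the fixed chord is selected; a diagonal sequence argument handles this.
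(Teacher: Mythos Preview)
Your derivative argument is essentially the paper's, but you have the positions backwards: since $b_n \to b_o$, for any \emph{fixed} $c < b_o$ one has $b_n > c$ eventually, not $b_n < c$ (and certainly not $b_n \le b$). The inequality you want,
\[
	\phinhat'(b_n+) \ \le \ \frac{\phinhat(c) - \phinhat(b)}{c - b}
	\qquad (b < c \text{ fixed in } (a_o,b_o)),
\]
holds precisely \emph{because} $b_n \ge c$ eventually, via $\phinhat'(b_n+) \le \phinhat'(c-) \le (\phinhat(c) - \phinhat(b))/(c-b)$. With that correction the rest of your argument is identical to the paper's.

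The genuine gap is in the first assertion. The claim that ``a log-concave density \ldots\ tends to $0$ at the edges of its support'' is false: take $f$ uniform on $[0,1]$, or more generally any log-concave density with $b_o < \infty$ and $f(b_o-) > 0$. So you cannot in general choose $[a,b]$ with $f < \eps$ on $\R \setminus [a,b]$, and your chain ``$\fnhat \le f(b) + \op(1) \le \eps + \op(1)$'' breaks at the second inequality. The paper avoids this by never trying to make $\fnhat$ small in absolute terms. Instead it fixes an auxiliary point $m < b$, bounds $\fnhat(x)$ on $[b,b_o)$ by $\fnhat(b) \exp\bigl( (x-b)(\phinhat(b) - \phinhat(m))/(b-m) \bigr)$, and then compares with $f$ via a case split on the sign of $\varphi'(b_o-)$: if $\varphi'(b_o-) \ge 0$ (forcing $b_o < \infty$), the resulting $R(b)$ is $f(b_o)\bigl[(f(b_o)/f(m))^{(b_o-b)/(b-m)} - 1\bigr]$; if $\varphi'(b_o-) < 0$, one takes $m$ with $\varphi'(m+) < 0$ so that the chord slope is eventually negative and $R(b) = f(b) - f(b_o)$. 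Your skeleton (bound $\fnhat$ on the tail by $\fnhat(b)$ times an exponential in a chord slope) is right, but it needs exactly this case analysis to cover densities that do not vanish at the boundary.
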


The remaining goal is to show that the right tails are not ``severely underestimated'', and for this task we distinguish the cases $b_o = \infty$ and $b_o < \infty$.

\begin{Theorem}
\label{thm:finite.b0}
Suppose that $b_o < \infty$.

\noindent
{\bf (a)} \ Let $f(b_o) = 0$. Then for any fixed $a \in (a_o,b_o)$,
\[
	\sup_{x \ge a} \bigl| \fnhat(x) - f(x) \bigr| \ \top \ 0 .
\]
Moreover, for any given sequence $(b_n)_n$ in $(a_o,b_o)$ with limit $b_o$,
\[
	\phinhat'(b_n+) \ \top \ \varphi'(b_o-) = -\infty .
\]

\noindent
{\bf (b)} \ Let $f(b_o) > 0$. Then for arbitrary fixed intervals $[a,b_n] \subset (a_o,b_o)$ such that $b_n \uparrow b_o$ and $n (1 - F(b_n)) \to \infty$,
\[
	\sup_{x \in [a,b_n]} \bigl| \phinhat(x) - \varphi(x) \bigr| \ \top \ 0 .
\]

\noindent
{\bf (c)} \ Let $f(b_o) > 0$ and $\varphi'(b_o-) > -\infty$. Then for any given sequence $(b_n)_n$ in $(a_o,b_o)$ such that $b_n \uparrow b_o$ and $\rho_n^{-1/3} (1 - F(b_n)) \to \infty$,
\[
	\phinhat'(b_n+) \ \top \ \varphi'(b_o-) .
\]
\end{Theorem}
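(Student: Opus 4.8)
The plan is to pair the upper bound already furnished by Theorem~\ref{thm0} with a matching lower bound obtained from a secant inequality and a quantitative consistency estimate at the scale $\rho_n^{1/3}$. Write $g := \varphi'(b_o-)$, which is finite by assumption. Because $b_n \uparrow b_o$ and $g > -\infty$, Theorem~\ref{thm0} immediately gives
\[
	\phinhat'(b_n+) \ \le \ g + \op(1) ,
\]
so the whole task reduces to showing $\phinhat'(b_n+) \ge g - \op(1)$.

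For this, fix $a \in (a_o, b_1)$ and put $\tilde b_n := b_o - \tfrac12(b_o - b_n)$, so that $a < b_n < \tilde b_n < b_o$ and $\tilde b_n \uparrow b_o$. Since $f$ is continuous and strictly positive on $[a, b_o]$ (concavity and upper semicontinuity of $\varphi$ together with $f(b_o) > 0$), one has $1 - F(x) \sim f(b_o)(b_o - x)$ as $x \uparrow b_o$; hence the hypothesis $\rho_n^{-1/3}(1 - F(b_n)) \to \infty$ is equivalent to $\rho_n^{-1/3}(b_o - b_n) \to \infty$, which also gives $\rho_n^{-1/3}(1 - F(\tilde b_n)) \to \infty$ and $\tilde b_n - b_n = \tfrac12(b_o - b_n) \gg \rho_n^{1/3}$. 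In particular $n(1 - F(\tilde b_n)) \to \infty$, so with probability tending to one there is an observation in $(\tilde b_n, b_o]$; on that event $X_{(n)} > \tilde b_n$ and $\phinhat$ is finite and concave on a neighbourhood of $[a, \tilde b_n]$.

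The crux is a rate of consistency at the scale $\rho_n^{1/3}$ near the (moving) right end, namely
\[
	\bigl| \phinhat(b_n) - \varphi(b_n) \bigr| + \bigl| \phinhat(\tilde b_n) - \varphi(\tilde b_n) \bigr| \ = \ \Op\bigl(\rho_n^{1/3}\bigr) .
\]
I would derive this by tracking the rate in the proof of Theorem~\ref{thm:finite.b0}(b): that argument controls $\phinhat - \varphi$ on $[a,b_n]$ via the characterization of the log-concave MLE in terms of the process $t \mapsto \int_{X_{(1)}}^{t} (\Fnhat - \Fnhatemp)$ and Chernov-type bounds for the empirical process, as in \cite{Duembgen_Rufibach_2009}. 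Because $f$ is bounded away from $0$ on $[a, b_o]$ and the number of observations beyond $\tilde b_n$ is of larger order than $\rho_n^{1/3} n$, the boundary at $b_o$ does not contaminate the estimate on $[a, \tilde b_n]$, so the bulk rate $\rho_n^{1/3}$ of \cite{Duembgen_Rufibach_2009} is attained there; this is where the hypothesis $\rho_n^{-1/3}(1 - F(b_n)) \to \infty$ is genuinely used, and it is the main obstacle of the proof. Everything else is soft.

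Granting the rate, the lower bound follows from concavity of $\phinhat$: on the event $\{X_{(n)} > \tilde b_n\}$,
\[
	\phinhat'(b_n+) \ \ge \ \frac{\phinhat(\tilde b_n) - \phinhat(b_n)}{\tilde b_n - b_n}
	\ = \ \frac{\varphi(\tilde b_n) - \varphi(b_n)}{\tilde b_n - b_n} \ + \ \frac{\Op(\rho_n^{1/3})}{\tilde b_n - b_n} .
\]
Here the remainder is $\op(1)$ because $\tilde b_n - b_n \gg \rho_n^{1/3}$; and since $\varphi'$ is nonincreasing on $(a_o, b_o)$ with infimum $g$ and $\varphi'(b_n-) \downarrow g$, the difference quotient $(\tilde b_n - b_n)^{-1}\int_{b_n}^{\tilde b_n} \varphi'(x)\,\d x$ lies in $[\,g, \varphi'(b_n-)\,] = [\,g, g + o(1)\,]$. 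Hence $\phinhat'(b_n+) \ge g - \op(1)$, which together with the upper bound from Theorem~\ref{thm0} gives $\phinhat'(b_n+) \top g = \varphi'(b_o-)$, as claimed.
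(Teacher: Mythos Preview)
Your secant strategy for part~(c) is natural, but it hinges entirely on the claim
\[
	\bigl| \phinhat(b_n) - \varphi(b_n) \bigr| + \bigl| \phinhat(\tilde b_n) - \varphi(\tilde b_n) \bigr| \ = \ \Op\bigl(\rho_n^{1/3}\bigr) ,
\]
and this is where the argument breaks down. You say you would obtain it ``by tracking the rate in the proof of Theorem~\ref{thm:finite.b0}(b)'', but that proof carries no rate at all: it is a soft $\op(1)$ argument by contradiction, comparing $(1-\Fnhat)/(1-F)$ at a kink and invoking~\eqref{eq:tails.Fnhat} and~\eqref{eq:consistencyTV}. There is nothing there to track. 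The $\Op(\rho_n^{1/3})$ rate you cite from \cite{Duembgen_Rufibach_2009} is stated for \emph{fixed} compact subintervals of $(a_o,b_o)$; your points $b_n,\tilde b_n$ move into the boundary, and showing that the rate constants stay bounded as the right endpoint approaches $b_o$ is precisely the delicate issue. Even granting that $f$ is bounded below on $[a,b_o]$ and that many observations lie beyond $\tilde b_n$, turning this heuristic into a proof would require reopening the Dümbgen--Rufibach argument and checking uniformity step by step --- work comparable to, or harder than, the result you are trying to prove. As written, the proposal is circular: the ``main obstacle'' is simply asserted.

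The paper avoids this problem altogether. Instead of controlling $\phinhat$ at two nearby moving points, it works with the conditional mean excess $\mu(x) = (1-F(x))^{-1}\int (y-x)^+\,P(\d y)$ and its estimated counterpart $\munhat$. Proposition~\ref{prop:mu}(b) converts an inequality on $\phinhat'(b_n+)$ into an inequality on $\munhat(Y_n)/\mu(Y_n)$ at the last kink $Y_n \le b_n$: if $\phinhat'(b_n+) < \varphi'(b_o-)-\eps$, then $\munhat(Y_n)/\mu(Y_n) - 1 \le -(\eps+o(1))(b_o-b_n)/6$. This is then confronted with the exponential inequality of Proposition~\ref{prop:munhatemp}, which gives $\munhat(Y_n)/\mu(Y_n) - 1 = \Op\bigl(\sqrt{\rho_n/(1-F(b_n))}\bigr)$ uniformly over kinks. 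The condition $\rho_n^{-1/3}(1-F(b_n)) \to \infty$ is exactly what makes $\sqrt{\rho_n/(1-F(b_n))} = o(1-F(b_n))$, yielding the contradiction. This route never needs a pointwise rate for $\phinhat$ near the boundary; the whole burden is carried by Chernov bounds for the empirical mean excess, which are available at every order statistic simultaneously.
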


\begin{Example}
We illustrate Theorem~\ref{thm0} and Theorem~\ref{thm:finite.b0}~(b-c) for samples from the uniform distribution on $[0,1]$. Figure~\ref{fig:finite.b0sample} depicts the functions $\phinhat$ (left panel) and $\phinhat'(\cdot +)$ (right panel) for one ``typical sample'' of size $n = 150$ (top), $n = 500$ (middle) and $n=2000$ (bottom). Figure~\ref{fig:finite.b0} shows the performance of $\phinhat$ in $10000$ simulations of a sample of size $n = 150$ (top), $n = 500$ (middle) and $n=2000$ (bottom). The left panels show the estimated $\gamma$-quantiles of $\phinhat(x)$, $x \in (0,1)$, for $\gamma = 0.01, 0.1, 0.25, 0.5, 0.75, 0.9, 0.99$. For the same values of $\gamma$, the right panels show the estimated $\gamma$-quantiles of $\phinhat'(x+)$, $x \in (0,1)$. As expected, the estimators $\phinhat$ and $\phinhat'(\cdot+)$ suffer from a substantial bias very close to the boundaries $0$ and $1$, but these problematic regions shrink as the sample size $n$ increases.
\end{Example}

\begin{figure}
\includegraphics[width=0.5 \textwidth]{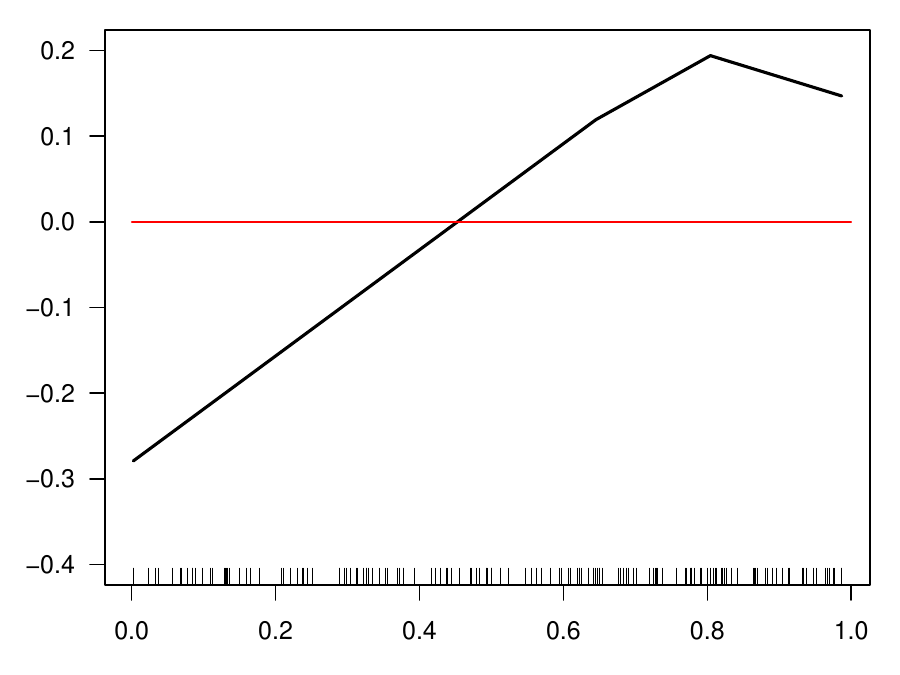}%
\hfill
\includegraphics[width=0.5 \textwidth]{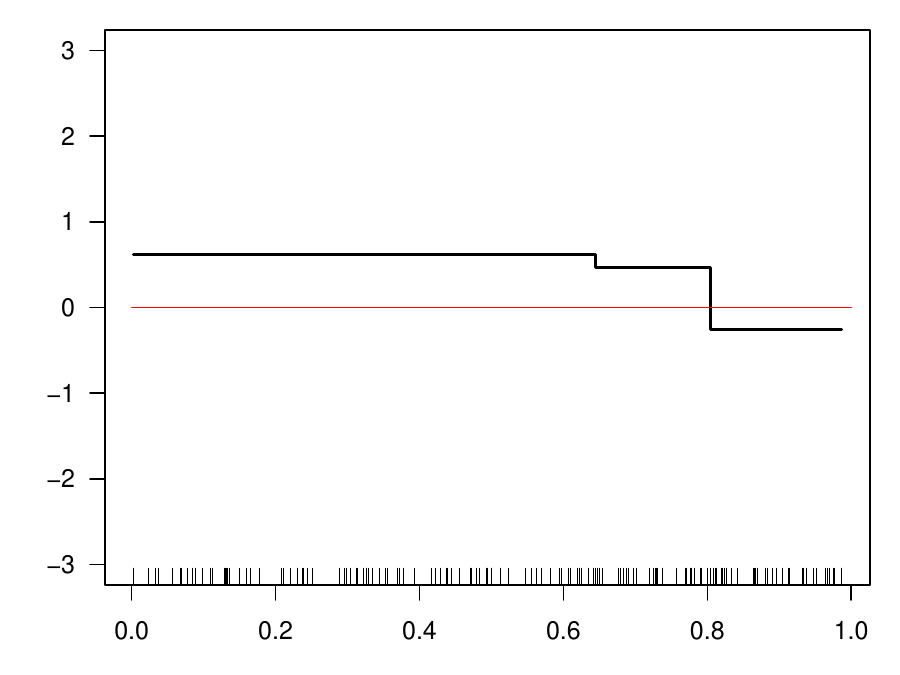}
\vfill
\includegraphics[width=0.5 \textwidth]{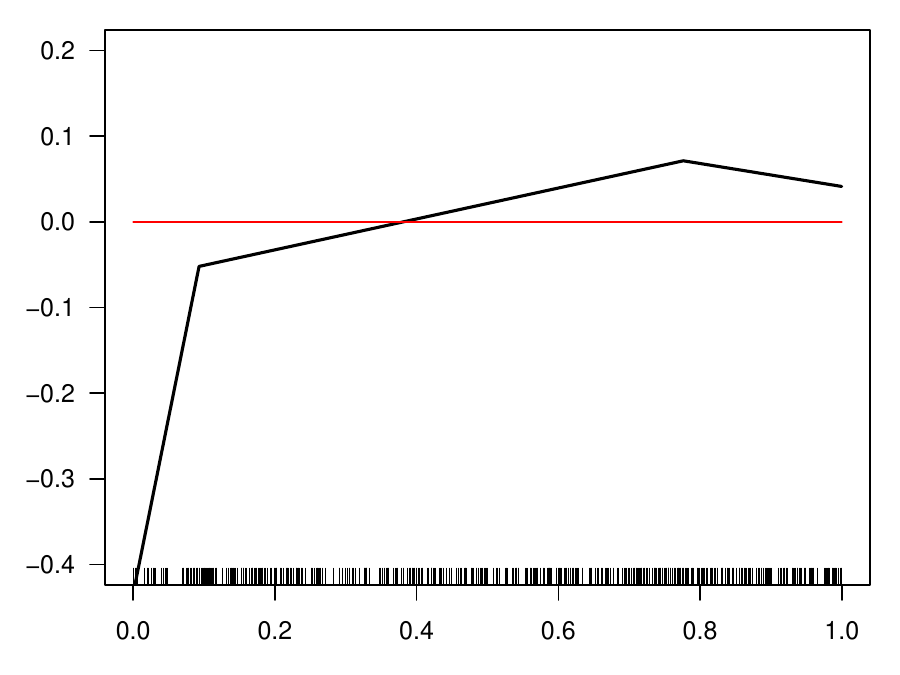}%
\hfill
\includegraphics[width=0.5 \textwidth]{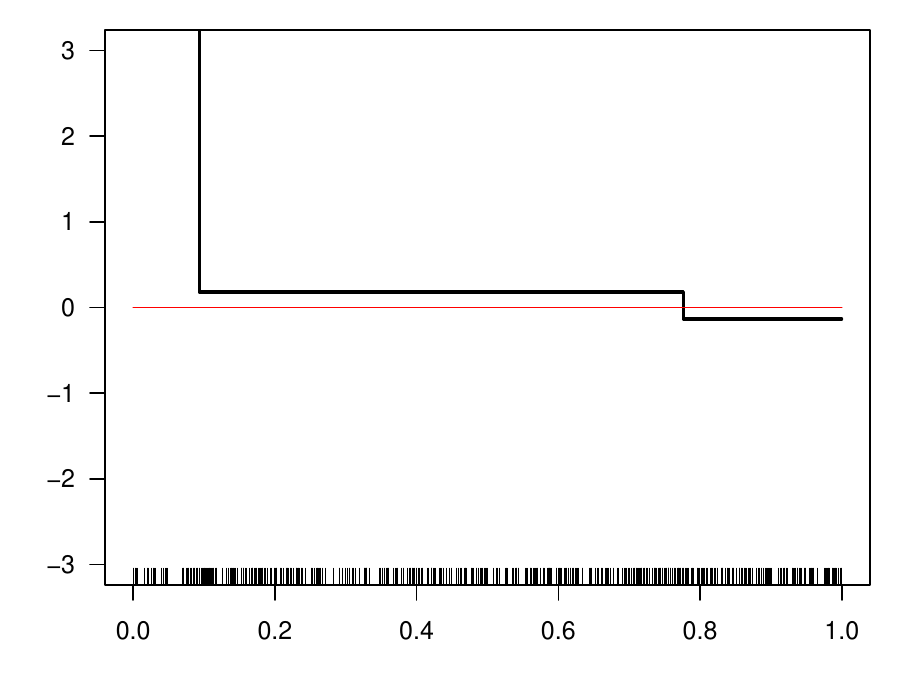}
\vfill
\includegraphics[width=0.5 \textwidth]{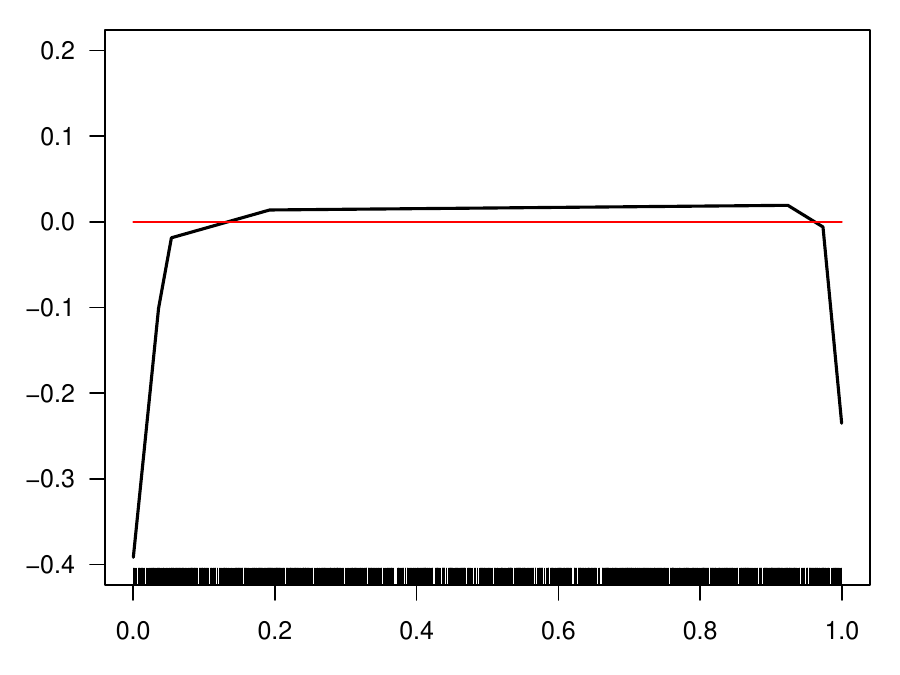}%
\hfill
\includegraphics[width=0.5 \textwidth]{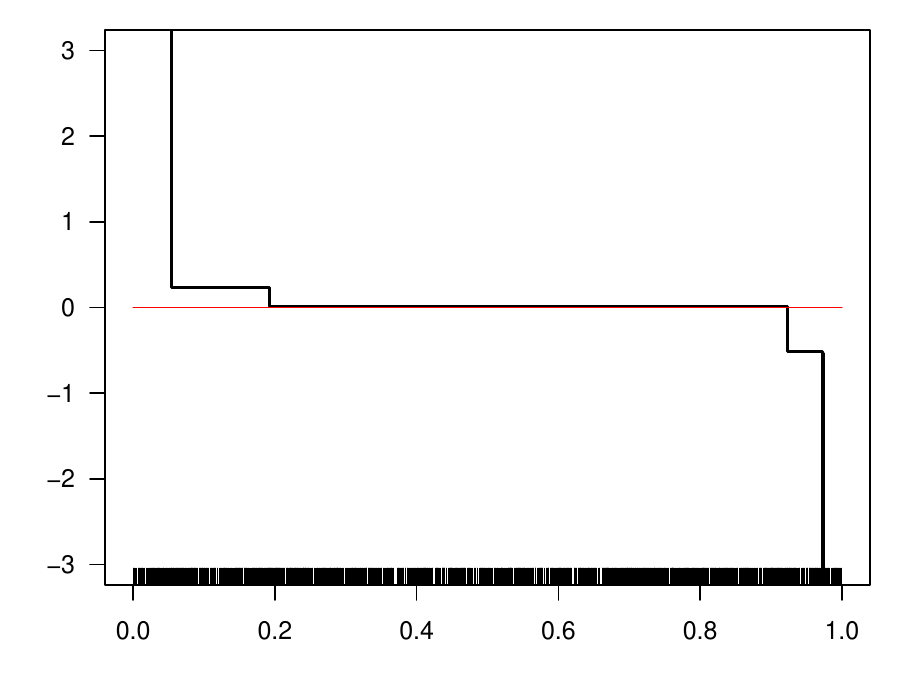}
\caption{The functions $\phinhat$ (left panel) and $\phinhat'(\cdot+)$ (right panel) for one particular sample of size $n = 150$ (top), $n = 500$ (middle) and $n=2000$ (bottom) from $\mathrm{Unif}[0,1]$. The sample is indicated as a rug plot, and the true values $\varphi$ and $\varphi'$ are shown in red.}
\label{fig:finite.b0sample}
\end{figure}

\begin{figure}
\includegraphics[width=0.5 \textwidth]{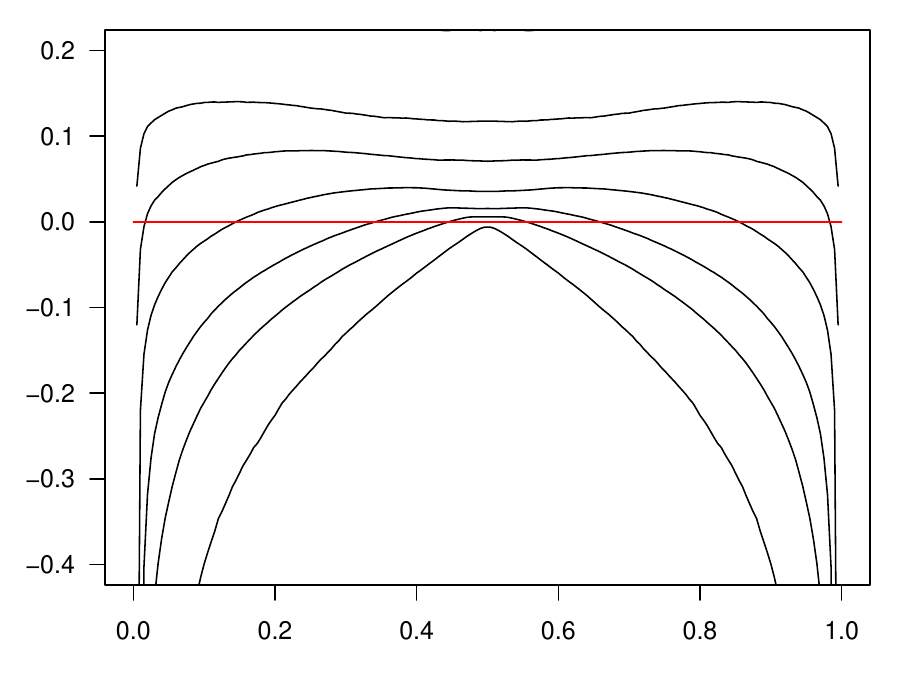}%
\hfill
\includegraphics[width=0.5 \textwidth]{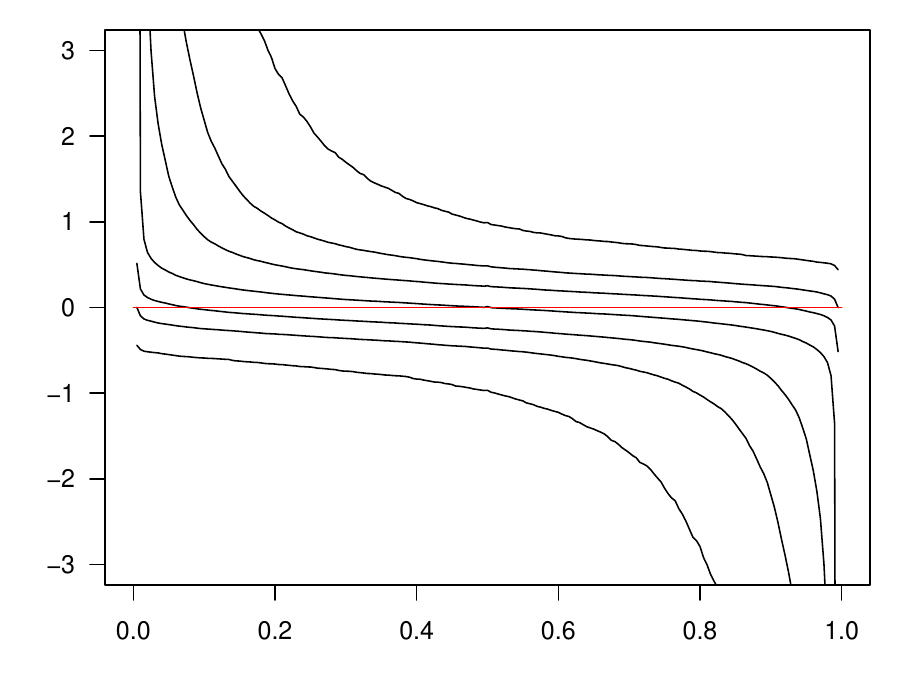}
\vfill
\includegraphics[width=0.5 \textwidth]{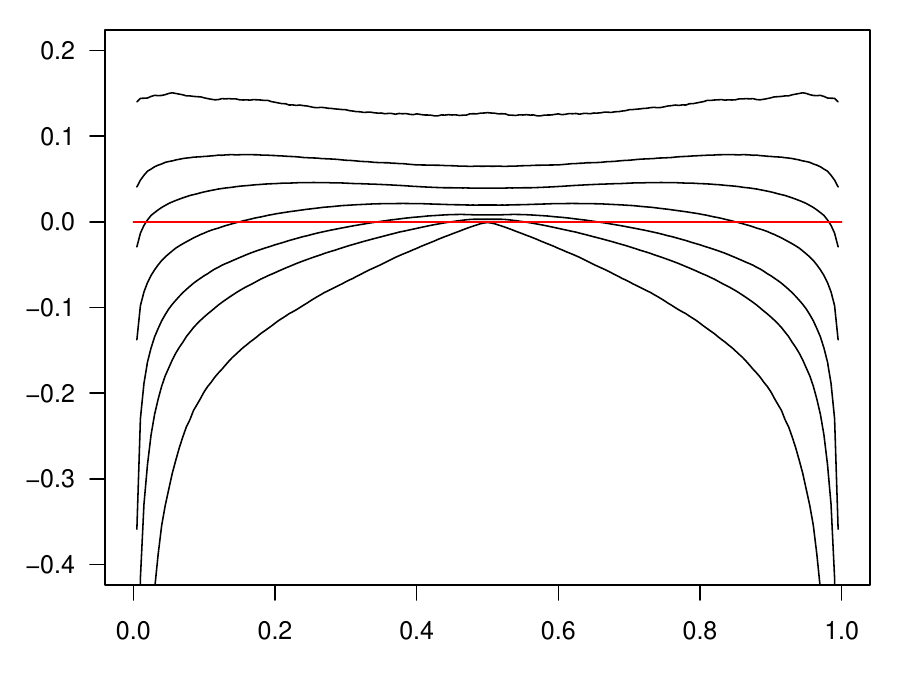}%
\hfill
\includegraphics[width=0.5 \textwidth]{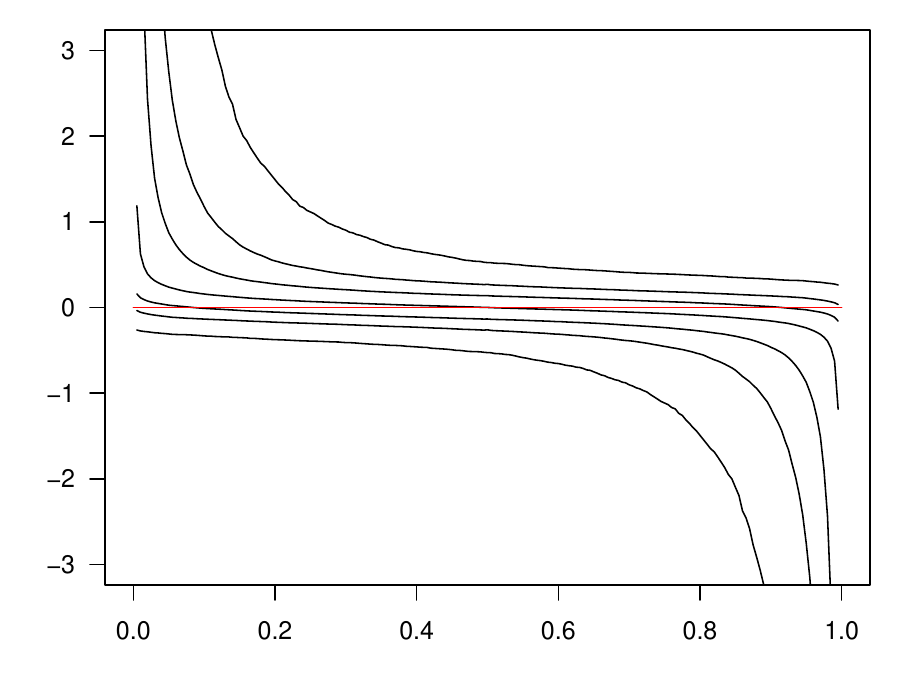}
\vfill
\includegraphics[width=0.5 \textwidth]{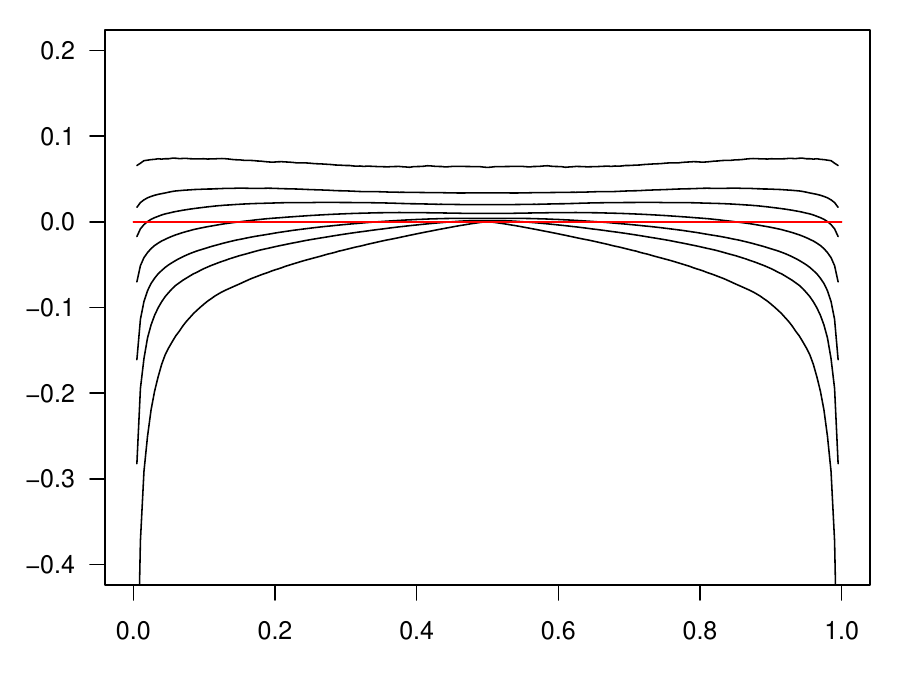}%
\hfill
\includegraphics[width=0.5 \textwidth]{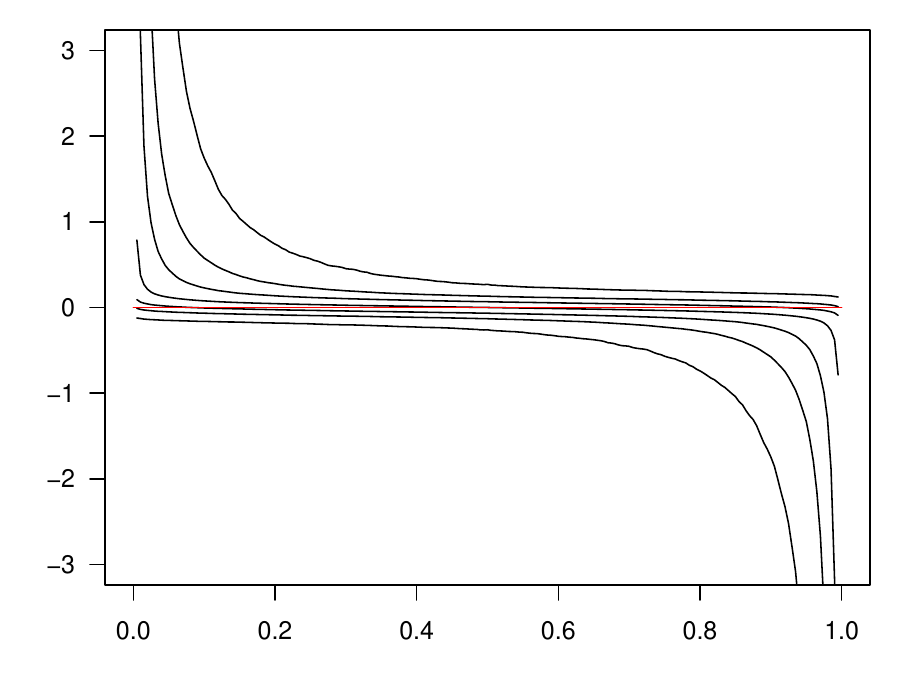}
\caption{Estimated $\gamma$-quantiles of $\phinhat(x)$ (left panel) and $\phinhat'(x+)$ (right panel) for samples of size $n = 150$ (top), $n = 500$ (middle) and $n=2000$ (bottom) from $\mathrm{Unif}[0,1]$. The true values $\varphi(x)$ and $\varphi'(x)$ are shown in red.}
\label{fig:finite.b0}
\end{figure}

If the support of $P$ is unbounded to the right, then $\varphi(x) \to -\infty$ and $\varphi'(x+) \to \varphi'(\infty-) \in [-\infty, 0)$ as $x \to \infty$. Here are some results complementing Theorem~\ref{thm0}.

\begin{Theorem}
\label{thm:infinite.b0}
Suppose that $b_o = \infty$. Let $(b_n)_n$ be a sequence in $(a_o,\infty)$ such that $b_n \to \infty$ and $(1 - F(b_n))/\rho_n \to \infty$.

\noindent
{\bf (a)} \ With asymptotic probability one, $\fnhat(b_n) > 0$, and
\[
	\phinhat'(b_n+) \ \top \ \varphi'(\infty-) .
\]

\noindent
{\bf (b)} \ Suppose that $\varphi'(\infty-) > -\infty$. Then for any fixed $a \in (a_o,\infty)$,
\[
	\max_{x \in [a, b_n]} \, \frac{ \bigl| \phinhat(x) - \varphi(x) \bigr|}{1 + |\varphi(x)|} \
	\top \ 0 .
\]

\noindent
{\bf (c)} \ Suppose that $\varphi'(\infty-) = -\infty$ and $\varphi$ is differentiable on some halfline $(a_*,\infty) \subset (a_o,\infty)$ with Lipschitz-continuous derivative $\varphi'$. Then, for arbitrary fixed $a \in (a_*,\infty)$ such that $\varphi'(a) < 0$,
\[
	\sup_{x \in [a,b_n]} \, \Bigl( \frac{\phinhat'(x+)}{\varphi'(x)} - 1 \Bigl)^+ \
	\top \ 0
\]
and
\[
	\sup_{x \in [a,b_n]} \, \bigl( \phinhat(x) - \varphi(x) \bigr)^+ \
	\top \ 0 ,
	\quad
	\sup_{x \in [a,b_n]} \, \frac{\bigl( \varphi(x) - \phinhat(x) \bigr)^+}{1 + |\varphi(x)|} \
	\top \ 0 .
\]
\end{Theorem}

Parts~(a) and (b) apply, for instance, if $P$ is a logistic distribution or a gamma distribution with shape parameter in $[1,\infty)$. Parts~(a) and (c) explain why the estimator $\phinhat$ is remarkably accurate in the tails if, for instance, $P$ is a Gaussian distribution. In particular, for any fixed $\eps > 0$,
\[
	\Pr \bigl( \fnhat(x) \le (1 + \eps) f(x) \
		\text{for all} \ x \in [a,b_n] \bigr) \
	\to \ 1 ,
\]
which is substantially stronger than the first conclusion of Theorem~\ref{thm0}. Furthermore, since $\varphi' < 0$ on $[a,b_n]$,
\[
	\Pr \bigl( \phinhat'(x+) \ge (1 + \eps) \varphi'(x) \
		\text{for all} \ x \in [a,b_n] \bigr) \
	\to \ 1 ,
\]
which complements the second conclusion of Theorem~\ref{thm0}.

\begin{Example}
We illustrate Theorem~\ref{thm0} and Theorem~\ref{thm:infinite.b0}~(b) for samples from the standard Gaussian distribution.
Figure~\ref{fig:infinite.b0sample} depicts the functions $\phinhat$ (left panel) and $\phinhat'(\cdot +)$ (right panel) for one ``typical sample'' of size $n=150$ (top), $n=500$ (middle) and $n=2000$ (bottom). Figure~\ref{fig:infinite.b0} shows the performance of $\phinhat$ in $10000$ simulations of a sample of size $n = 150$ (top), $n=500$ (middle) and $n=2000$ (bottom). The left-hand side shows the estimated $\gamma$-quantiles of $\phinhat(x)$, $x \in (-4,4)$ for $\gamma = 0.01, 0.1, 0.25, 0.5, 0.75, 0.9, 0.99$.  For the same values of $\gamma$, the right-hand side shows the estimated $\gamma$-quantiles of $\phinhat'(x+)$, $x \in (-4,4)$.
\end{Example}

\begin{figure}
\includegraphics[width=0.5 \textwidth]{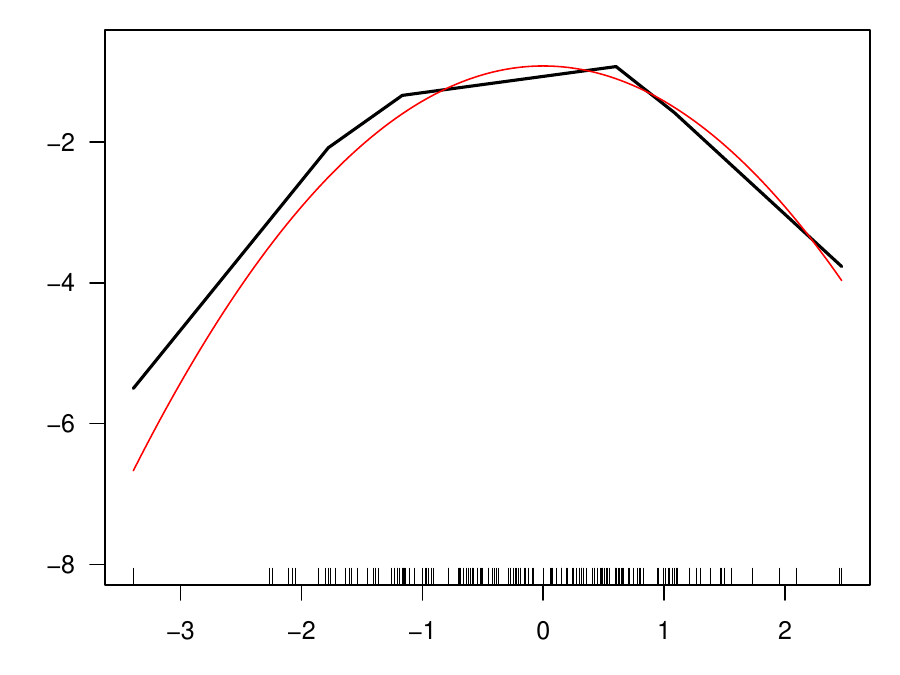}%
\hfill
\includegraphics[width=0.5 \textwidth]{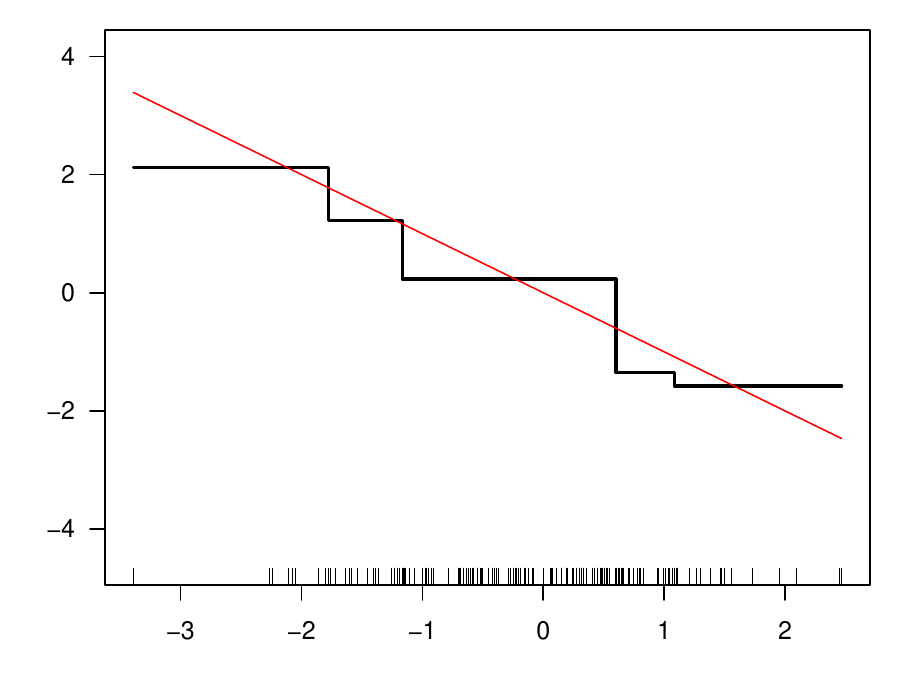}
\vfill
\includegraphics[width=0.5 \textwidth]{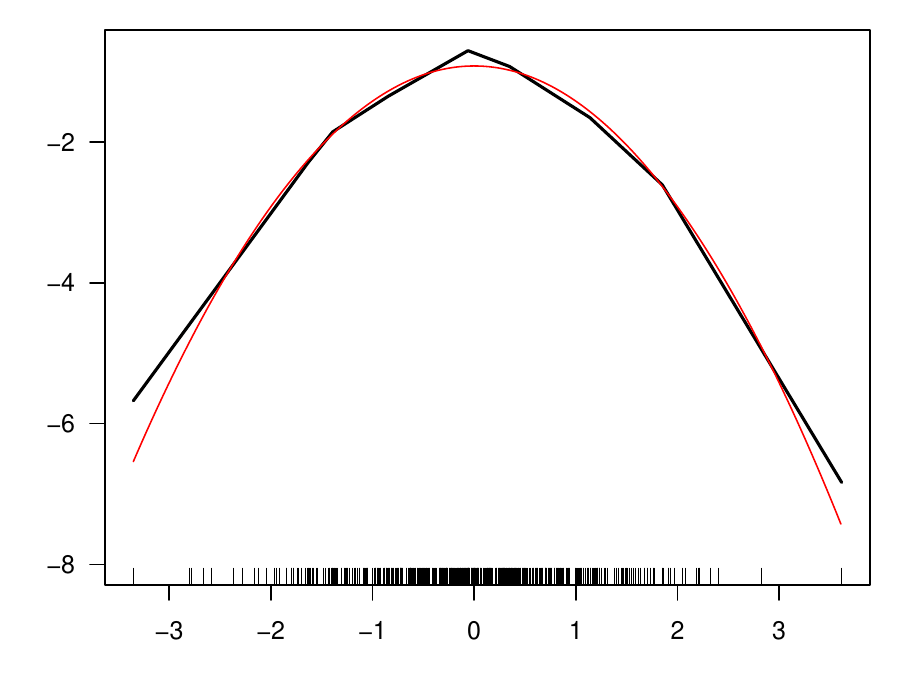}%
\hfill
\includegraphics[width=0.5 \textwidth]{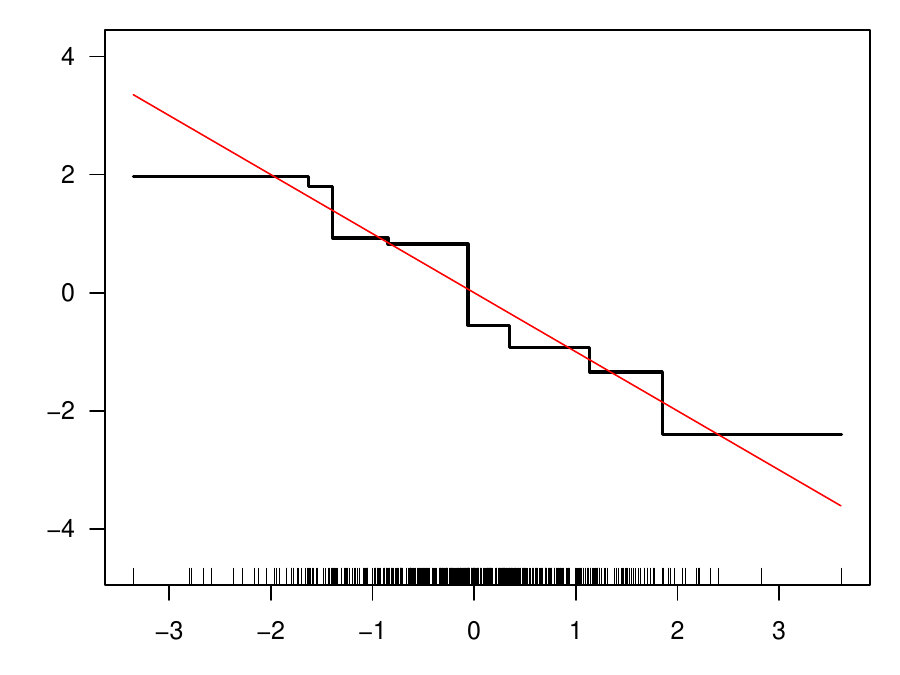}
\vfill
\includegraphics[width=0.5 \textwidth]{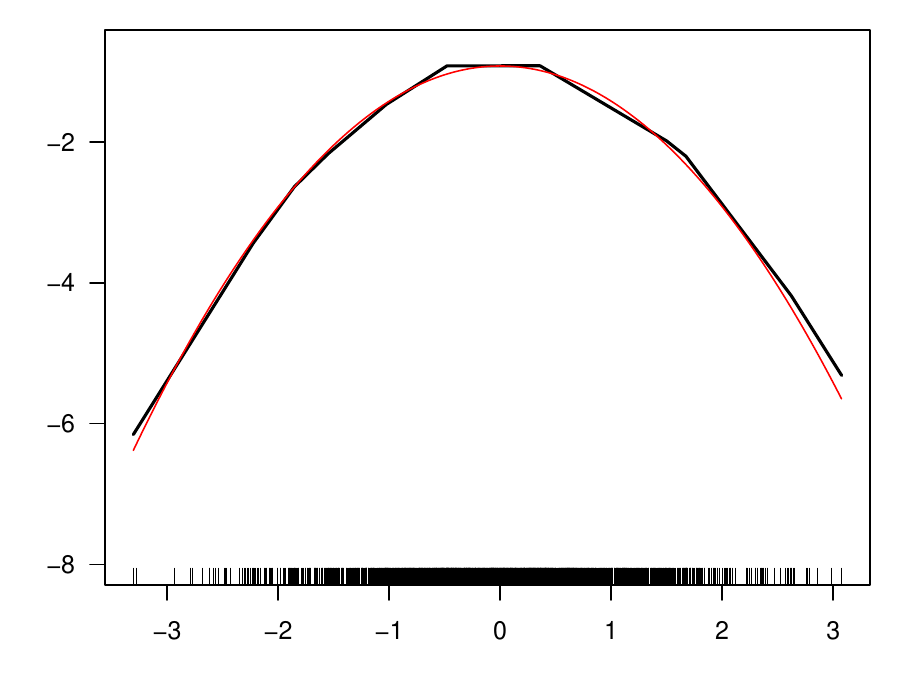}%
\hfill
\includegraphics[width=0.5 \textwidth]{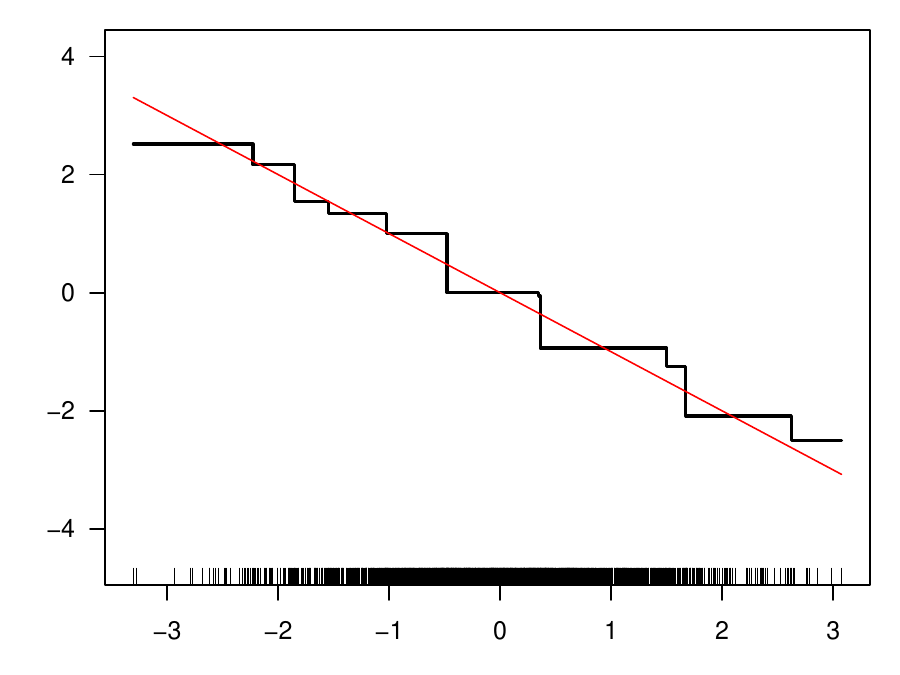}
\caption{The functions $\phinhat$ (left panel) and $\phinhat'(\cdot+)$ (right panel) for one particular sample of size $n = 150$ (top), $n = 500$ (middle) and $n=2000$ (bottom) from $\mathrm{N}(0,1)$. The sample is indicated as a rug plot, and the true values $\varphi$ and $\varphi'$ are shown in red.}
\label{fig:infinite.b0sample}
\end{figure}

\begin{figure}
\includegraphics[width=0.5 \textwidth]{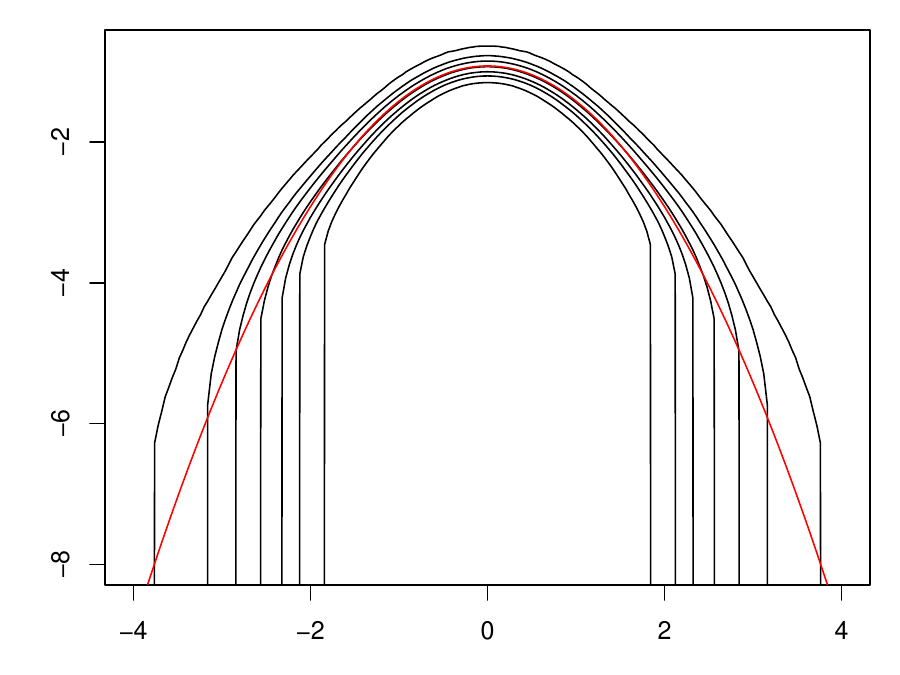}%
\hfill
\includegraphics[width=0.5 \textwidth]{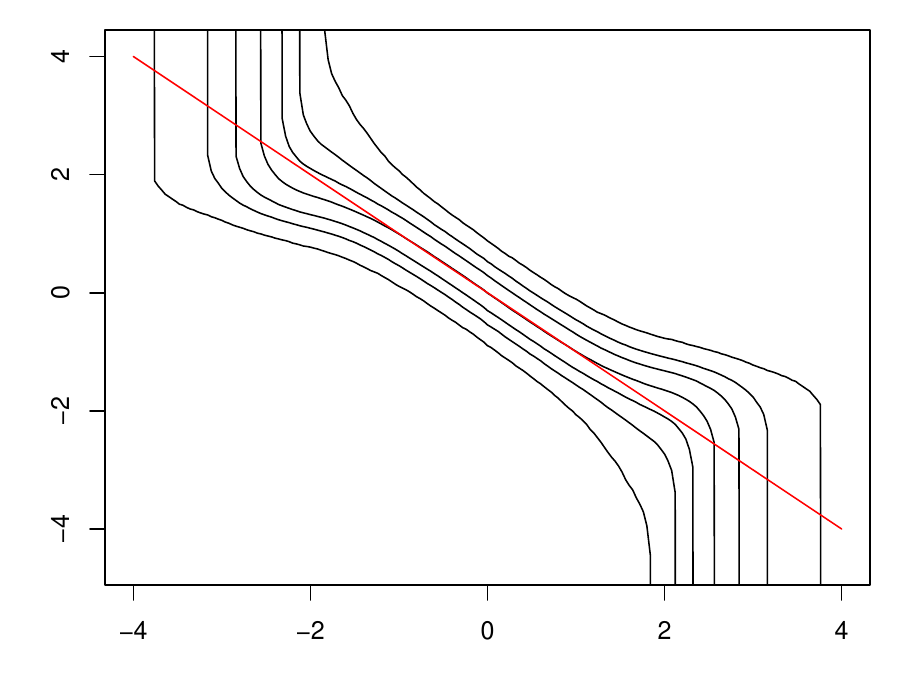}
\vfill
\includegraphics[width=0.5 \textwidth]{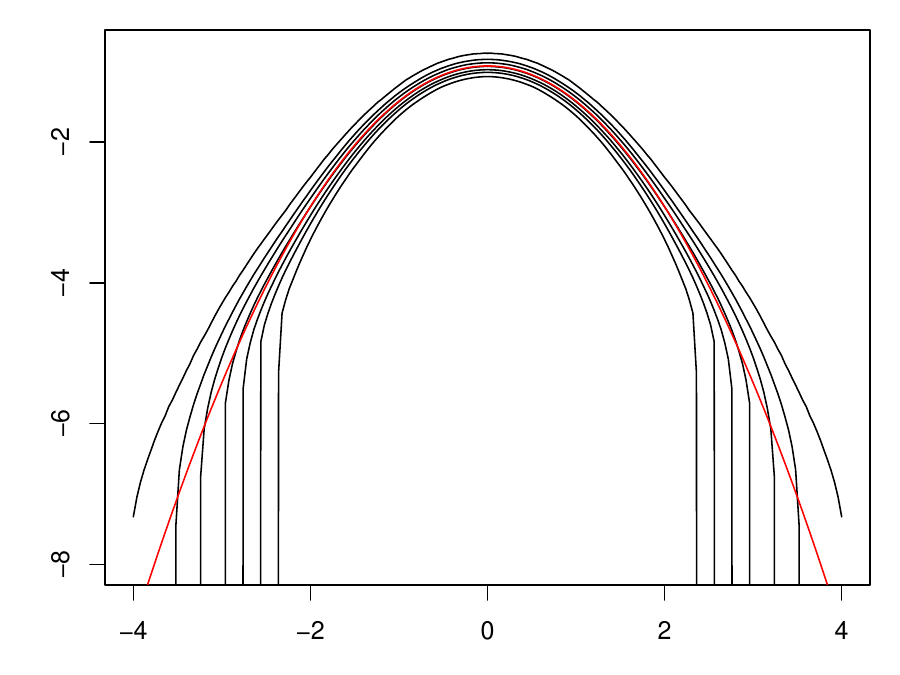}%
\hfill
\includegraphics[width=0.5 \textwidth]{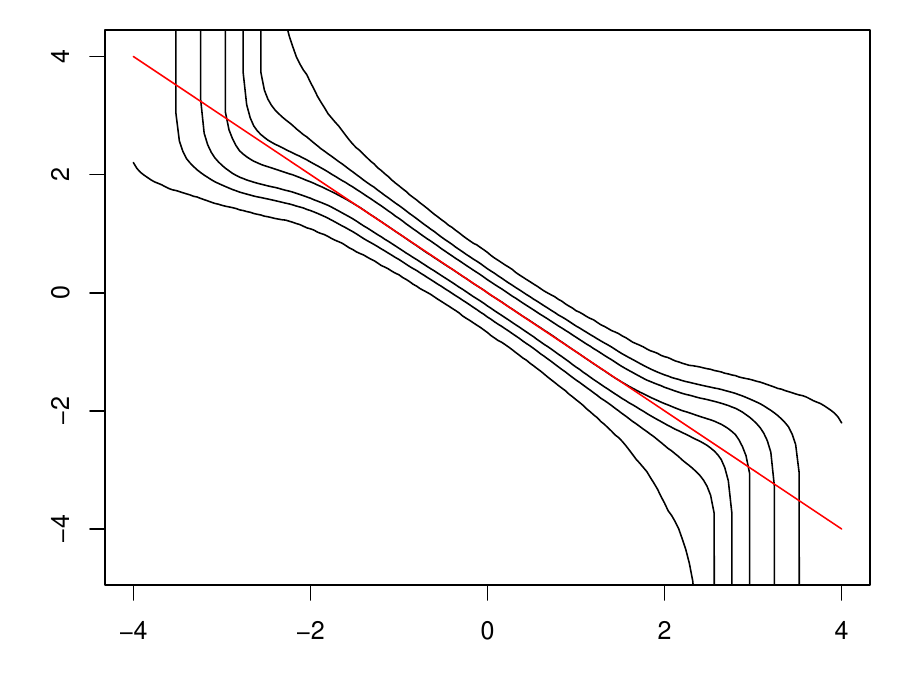}
\vfill
\includegraphics[width=0.5 \textwidth]{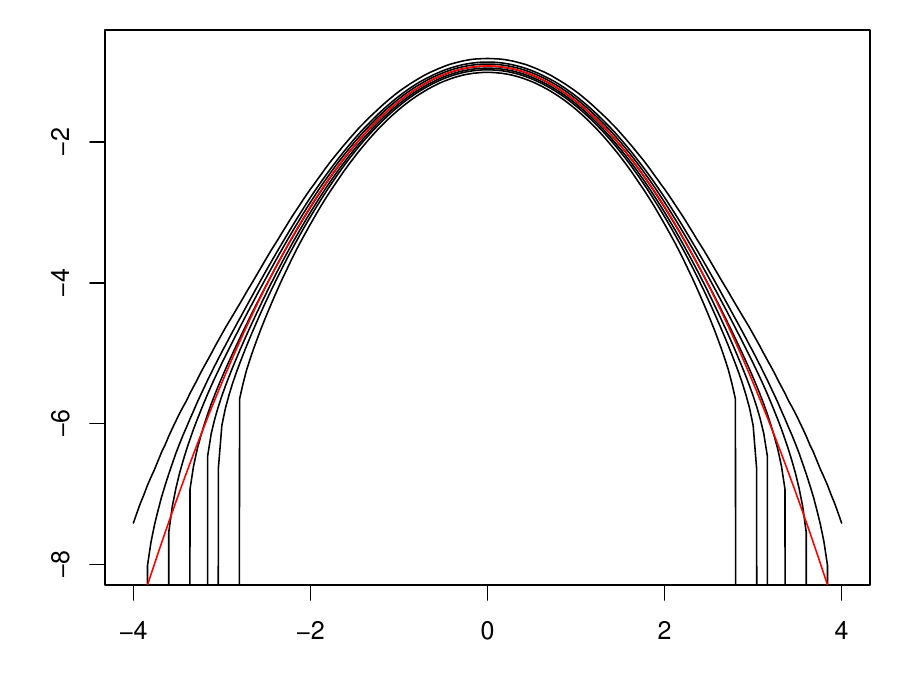}%
\hfill
\includegraphics[width=0.5 \textwidth]{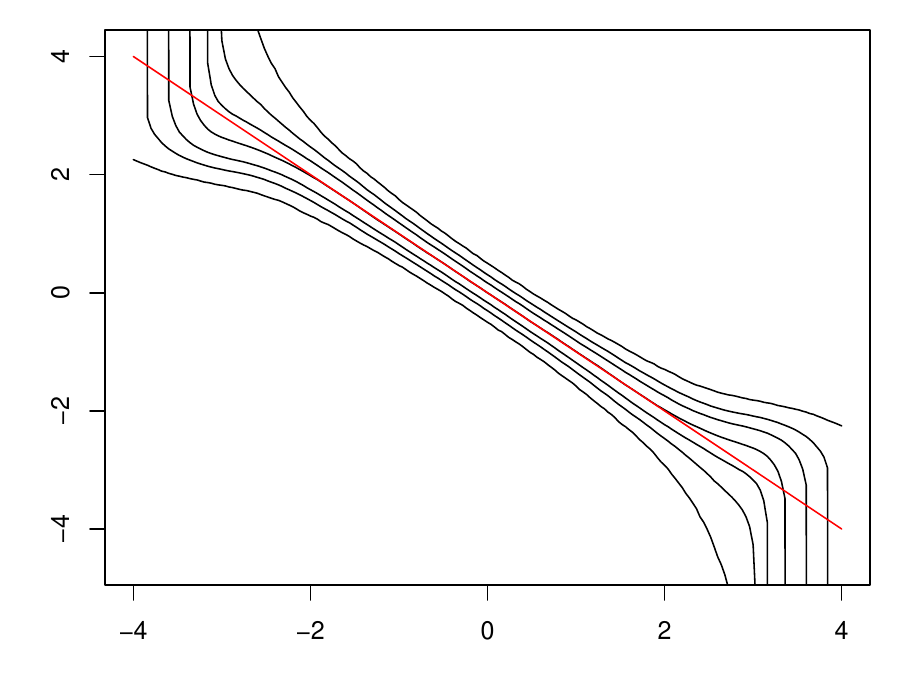}
\caption{Estimated $\gamma$-quantiles of $\phinhat(x)$ (left panel) and $\phinhat'(x+)$ (right panel) for samples of size $n = 150$ (top), $n = 500$ (middle) and $n=2000$ (bottom) from $\mathrm{N}(0,1)$. The true values $\varphi(x)$ and $\varphi'(x)$ are shown in red.}
\label{fig:infinite.b0}
\end{figure}

%==========================
\section{Auxiliary results}
\label{sec:Auxiliary}
%==========================

In what follows, let $P$ and $\Pnhat$ be the distribution with density $f$ and $\fnhat$, respectively. The corresponding distribution functions are denoted by $F$ and $\Fnhat$, respectively. In addition, let $\Pnhatemp$ and $\Fnhatemp$ be the empirical distribution and the empirical distribution function, respectively, of the observations $X_1,\ldots,X_n$.

%----------------------------------------------
\subsection{More about $\fnhat$ and $\phinhat$}
%----------------------------------------------

We mentioned already some properties of $\phinhat$ and $\fnhat$. Recall that for any fixed $[a,b] \subset (a_o,b_o)$,
\begin{equation}
\label{eq:consistency.phi.ab}
	\sup_{x \in [a,b]} \, \bigl| \phinhat(x) - \varphi(x) \bigr| \
	\top \ 0 ,
\end{equation}
and since $\varphi$ is bounded on $[a,b]$, this implies that
\begin{equation}
\label{eq:consistency.f.ab}
	\sup_{x \in [a,b]} \, \bigl| \fnhat(x) - f(x) \bigr| \
	\top \ 0 .
\end{equation}
An important consequence of the latter result is that
\begin{equation}
\label{eq:consistencyTV}
	\sup_{B \in \mathrm{Borel}(\R)} \bigl| \Pnhat(B) - P(B) \bigr| \
	= \ 2_{}^{-1} \int \bigl| \fnhat(x) - f(x) \bigr| \, \d x \
	\top \ 0 ,
\end{equation}
see \cite{Duembgen_Rufibach_2009}. The latter paper also provides the following key inequalities: Let
\[
	\Snhat \ := \ \{X_{(1)},X_{(n)}\}
		\cup \bigl\{ x \in (X_{(1)}, X_{(n)}) : \phinhat(x-) > \phinhat(x+) \bigr\} ,
\]
the set of kinks of $\phinhat$. Then for arbitrary $b \in \R$,
\begin{equation}
\label{eq:char.1}
	\int (x - b)^+ \, \Pnhatemp(\d x) \ 
	\begin{cases}
		\displaystyle
		\ge \ \int (x - b)^+ \, \Pnhat(\d x) , \\
		\displaystyle
		= \ \int (x - b)^+ \, \Pnhat(\d x)
			& \text{if} \ b \in \Snhat .
	\end{cases}
\end{equation}
Moreover, for $b \in \Snhat$,
\begin{equation}
\label{eq:char.2}
	\Fnhatemp(b-) \ \le \ \Fnhat(b) \ \le \ \Fnhatemp(b)
\end{equation}
Finally,
\[
	\int x \, \Pnhatemp(\d x) \ = \ \int x \, \Pnhat(\d x) .
\]

%----------------------------------------
\subsection{Inequalities for $\Pnhatemp$}
%----------------------------------------

Concerning $\Fnhatemp$, note the following useful inequality: For any $b < b_o$, 
\[
	\Ex \biggl( \sup_{x \le b} \, \Bigl| \frac{\Fnhatemp(x) - F(x)}{1 - F(x)} \Bigr|^2 \biggr) \
	\le \ \frac{4}{n(1 - F(b))} .
\]
This follows from the well-known fact that $M_x := [\Fnhatemp(x) - F(x)]/[1 - F(x)]$ defines a martingale $(M_x)_{x < b_o}$ and from one of Doob's martingale inequalities \citep{Shorack_Wellner_1986,Hall_Heyde_1980}. In particular, for any sequence of numbers $b_n \in (a_o,b_o)$,
\begin{equation}
\label{eq:tails.Fnhatemp}
	\sup_{x \le b_n} \Bigl| \frac{1 - \Fnhatemp(x)}{1 - F(x)} - 1 \Bigr| \
	\top \ 0
	\quad \text{if} \ n(1 - F(b_n)) \to \infty .
\end{equation}
Combining this with \eqref{eq:char.2} leads to the fact that
\begin{equation}
\label{eq:tails.Fnhat}
	\max_{x \in \Snhat : x \le b_n} \Bigl| \frac{1 - \Fnhat(x)}{1 - F(x)} - 1 \Bigr| \
	\top \ 0
	\quad \text{if} \ n(1 - F(b_n)) \to \infty .	
\end{equation}

Here is another useful result about $\Pnhatemp(I) - P(I)$ over real intervals $I$.

\begin{Proposition}[Consistency of $\Pnhatemp$]
\label{prop:Pnhatemp}
For any constant $\tau > 2$, with asymptotic probability one,
\[
	\bigl| \Pnhatemp(I) - P(I) \bigr| \
	\le \ \sqrt{2 \tau \rho_n \min\{\Pnhatemp(I),P(I)\}} + (\tau + 2) \rho_n
\]
for arbitrary intervals $I \subset \R$.
\end{Proposition}

\begin{proof}[Proof of Proposition~\ref{prop:Pnhatemp}]
Note first that for arbitrary indices $0 \le j < k \le n+1$ with $k - j \le n$, the random variable $P(X_{(j)}, X_{(k)})$ follows a beta distribution with parameters $k-j$ and $n+1-k+j$, see Chapter~3.1 of \cite{Shorack_Wellner_1986}. Here we set $X_{(0)} := a_o$ and $X_{(n+1)} := b_o$. In particular, its mean equals
\[
	p_{njk} \ := \ \frac{k-j}{n+1} ,
\]
and it follows from Proposition~2.1 of \cite{Duembgen_1998} that for any $\eta > 0$,
\[
    \Pr \bigl[ \Psi \bigl( P(X_{(j)},X_{(k)}), p_{njk} \bigr) \ge \eta \rho_n \bigr] \
	\le \ 2 \exp( - (n+1) \eta \rho_n) \ < \ 2 n^{-\eta} ,
\]
where $\Psi(x,p) := p \log(p/x) + (1 - p) \log[(1 - p)/(1 - x)]$. Moreover, for $c > 0$, the inequality $\Psi(x,p) < c$ implies that $|x - p| < \sqrt{2cp(1 - p)} + |1 - 2p| c$. Furthermore, since $\Psi(x,p) = K_0(x,p) = K_1(p,x)$ in the notation of \cite{Duembgen_Wellner_2023}, Lemma~S.12 in the latter paper shows that $|x - p| < \sqrt{2cx} + (2/3) |1 - 2x| c$ whenever $\Psi(x,p) < c$. Consequently, the probability that
\begin{equation}
\label{ineq:Pnhatemp}
	\bigl| P(X_{(j)},X_{(k)}) - p_{njk} \bigr| \
	\le \ \sqrt{2 \eta \rho_n \min\{P(X_{(j)},X_{(k)}), p_{njk}\}} + \eta \rho_n
\end{equation}
for all indices $0 \le j < k \le n+1$ is at least $1 - 2 \binom{n+2}{2} n^{-\eta} = 1 - (1 + 3/n + 2/n^2) n^{2-\eta}$, and this converges to $1$ if $\eta > 2$.

It remains to be shown that in case of $\eta \in (2,\tau)$, inequality \eqref{ineq:Pnhatemp} implies that
with asymptotic probability one,
\[
	\bigl| P(I) - \Pnhatemp(I) \bigr| \
	\le \ \sqrt{2 \tau \rho_n \min\{P(I),\Pnhatemp(I)\}} + (\tau + 2) \rho_n
\]
for all intervals $I \subset \R$. Note first that
\begin{equation}
\label{ineq:Pnhatemp2}
	\max_{\ell=1,\ldots,n+1} P(X_{(\ell-1)}, X_{(\ell)}) \
	\le \ \rho_n + \Op(n^{-1}) .
\end{equation}
This can be deduced from the well-known representation $P(X_{(\ell-1)},X_{(\ell)}) = E_\ell / \sum_{i=1}^{n+1} E_i$ with independent, standard exponential random variables $E_1, \ldots, E_{n+1}$. Now, for an arbitrary nonvoid interval $I$, let $j = j_n(I) \in \{0,1,\ldots,n\}$ be maximal and $k = k_n(I) \in \{1,2,\ldots,n+1\}$ be minimal such that $I \subset [X_{(j)},X_{(k)}]$. If $k - j \le 2$, then
\[
	\Pnhatemp(I) - P(I) \ \le \ \Pnhatemp(I) \ \le \ 3/n \ = \ o(\rho_n)
\]
and
\[
	P(I) - \Pnhatemp(I) \ \le \ P(I) \ \le \ 2 \max_{\ell=1,\ldots,n+1} P(X_{(\ell-1)}, X_{(\ell)})
	\ \le \ (2 + \op(1)) \rho_n .
\]
In case of $k - j \ge 3$, let $\tilde{I} := [X_{(j+1)},X_{(k-1)}] \subset I$ with
\[
	\Pnhatemp(\tilde{I}) = \frac{k-j-1}{n} \ > \ \frac{k-j-2}{n+1} = p_{n,j+1,k-1} .
\]
Consequently, it follows from \eqref{ineq:Pnhatemp} and \eqref{ineq:Pnhatemp2} that
\begin{align*}
	P(I) - \Pnhatemp(I) \
	&\le \ P(\tilde{I}) - \Pnhatemp(\tilde{I})
		+ 2 \max_{\ell=0,\ldots,n} P(X_{(\ell-1)}, X_{(\ell)}) \\
	&\le \ P(\tilde{I}) - p_{n,j+1,k-1} + (2 + \op(1)) \rho_n \\
	&\le \ \sqrt{2 \eta \rho_n \min\{P(\tilde{I}),p_{n,j+1,k-1}\}} + (\eta + 2 + \op(1)) \rho_n \\
	&\le \ \sqrt{2 \eta \rho_n \min\{P(I),\Pnhatemp(I)\}} + (\eta + 2 + \op(1)) \rho_n ,
\end{align*}
and
\begin{align*}
	\Pnhatemp(I) - P(I)
	&\le \ 3/n + (1 + 1/n) p_{n,j+1,k-1} - P(\tilde{I}) \\
	&\le \ 4/n + (1 + 1/n) \bigl( p_{n,j+1,k-1} - P(\tilde{I}) \bigr) \\
	&\le \ o(\rho_n) + (1 + 1/n)
		\Bigl( \sqrt{2 \eta \rho_n \min\{P(I),\Pnhatemp(I)\}} + \eta \rho_n \Bigr) \\
	&= \ \sqrt{2 (\eta + o(1)) \rho_n \min\{P(I),\Pnhatemp(I)\}} + (\eta + o(1)) \rho_n ,
\end{align*}
where the terms $\op(1)$ and $o(1)$ depend only on $n$, not on the interval $I$.
\end{proof}

%--------------------------------------------------
\subsection{Truncated and conditional means of $P$}
%--------------------------------------------------

For $-\infty < a < b \le \infty$, let
\begin{align*}
	M(a,b) \
	&:= \ \int_{(a,b)} (x - a) \, P(\d x) , \\
	\mu(a,b) \
	&:= \ \begin{cases}
			P(a,b)^{-1} M(a,b) & \text{if} \ P(a,b) > 0 , \\
			0 & \text{else} .
		\end{cases}
\end{align*}
For $-\infty \le a < b < \infty$, we set
\[
	W(a,b) \ := \ \int_{(a,b)} (b - x) \, P(\d x) .
\]
Further, let
\begin{equation}
\label{eq:def.mua}
	\mu(a) \ := \ \mu(a,\infty) .
\end{equation}
The univariate function $\mu$ is known as the mean excess function or mean residual lifetime in fields such as extreme value theory and actuarial science.

To formulate various approximations and inequalities for these functions $M$, $\mu$ and $W$, we need three auxiliary functions and some properties thereof.

\begin{Proposition}
\label{prop:N.and.nu}
Let $N, \nu, V : \R \to (0,\infty)$ be given by
\[
	N(t) \ := \ \int_0^1 u e^{tu} \, \d u , \quad
	\nu(t) \ := \ N(t) \Big/ \int_0^1 e^{tu} \, \d u , \quad
	V(t) \ := \ \int_0^1 (1 - u) e^{tu} \, \d u .
\]
These functions are continuously differentiable with $N', \nu', V' > 0$, where $N(0) = \nu(0) = V(0) = 1/2$, $N'(0) = 1/3$, $\nu'(0) = 1/12$ and $V'(0) = 1/6$. Moreover,
\[
	\lim_{t \to -\infty} \, t^2 N(t) \
	= \ \lim_{t \to -\infty} \, |t| \nu(t) \
	= \ \lim_{t \to -\infty} \, |t| V(t) \ = \ 1 .
\]
\end{Proposition}

This proposition follows from elementary calculus. The limits of $t^2 N(t)$, $|t| \nu(t)$ and $|t| V(t)$ as $t \to -\infty$ follow from the explicit formulae
\[
	N(t) \ = \ \frac{1 - (1-t)e^t}{t^2} , \quad
	\nu(t) \ = \ \frac{1 - (1 - t)e^t}{t(e^t - 1)} , \quad
	V(t) \ = \ \frac{-1 + (e^t - 1)/t}{t}
\]
for $t \ne 0$.

The next proposition summarizes several useful properties of the functions $M$, $\mu$ and $W$ and their relation to $f$ and $\varphi$. The monotonicity property of $\mu$ in part~(a) was noted already by \cite{Bagnoli_Bergstrom_2005}. In the proposition's proof and later on, we use repeatedly well-known results about the stochastic and likelihood ratio orders between probability distributions on the real line, see \cite{Shaked_Shanthikumar_2007} for the foundations. Specifically, let $P_1$ and $P_2$ be probability distributions on the real line with densities $f_1$ and $f_2$, respectively. If $f_1 \ge f_2$ on $(-\infty,x_o)$ and $f_1 \le f_2$ on $(x_o,\infty)$ for some real number $x_o$, then $P_1 \le_{\rm st} P_2$, where $\le_{\rm st}$ denotes stochastic order. In particular, if $f_2/f_1$ is non-decreasing on $\{f_1 + f_2 > 0\}$, then such a number $x_o$ has to exist, whence $P_1 \le_{\rm st} P_2$.

\begin{Proposition}[Properties of $M$ and $\mu$]\strut
\label{prop:M.and.mu}

\noindent
{\bf (a)} \ The function $\mu$ given by \eqref{eq:def.mua} is non-increasing and Lipschitz-continuous with constant one.

\noindent
{\bf (b)} \ Let $-\infty < a < b < c \le \infty$ such that $P(a,b) > 0$. Then,
\[
	\frac{1}{p} \ \le \ \frac{M(a,c)}{M(a,b)} \ \le \ \frac{1}{p + (1 - p)\log(1 - p)} ,
\]
where $p := P(a,b)/P(a,c) \in (0,1]$ and $0 \log 0 := 0$.

\noindent
{\bf (c)} \ For arbitrary real numbers $a < b$ in $[a_o, b_o]$,
\[
	f(a) (b - a)^2 N \bigl( \varphi'(b-)(b - a) \bigr) \
	\le \ M(a,b) \
	\le \ f(a) (b - a)^2 N \bigl( \varphi'(a+)(b - a) \bigr) ,
\]
\[
	f(a) (b - a)^2 V \bigl( \varphi'(b-)(b - a) \bigr) \
	\le \ W(a,b) \
	\le \ f(a) (b - a)^2 V \bigl( \varphi'(a+)(b - a) \bigr) ,
\]
where $N(-\infty), V(-\infty) := 0$ and $N(\infty), V(\infty) := \infty$. Moreover,
\[
	(b - a) \nu \bigl( \varphi'(b-)(b - a) \bigr) \
	\le \ \mu(a,b) \
	\le \ (b - a) \nu \bigl( \varphi'(a+)(b - a) \bigr)
\]
where $\nu(-\infty) := 0$, $\nu(\infty) := 1$.

\noindent
{\bf (d)} \ Suppose that $b_o = \infty$. Then for arbitrary real $a \in [a_o,\infty)$ with $\varphi'(a+) < 0$,
\[
	\frac{f(a)}{\varphi'(\infty-)^2} \
	\le \ M(a,\infty) \
	\le \ \frac{f(a)}{\varphi'(a+)^2}
	\quad\text{and}\quad
	\frac{1}{|\varphi'(\infty-)|} \
	\le \ \mu(a) \
	\le \ \frac{1}{|\varphi'(a+)|} .
\]

\noindent
{\bf (e)} \ Suppose that $\varphi$ is differentiable on some interval $(a,b) \subset [a_o,b_o]$ with $a \in \R$ and $\varphi'(a+) < 0$. Further, let $\varphi'$ be Lipschitz-continuous with constant $L$ on $(a,b)$. Then
\[
	M(a,b) \
	\ge \ \exp \bigl( - 3 L \varphi'(a+)^{-2} \bigr) \cdot
		\begin{cases}
			\displaystyle
			\frac{f(a)}{\varphi'(a+)^2}
				& \text{if} \ b = \infty , \\[2ex]
			f(a) (b - a)^2 N \bigl( \varphi'(a+)(b - a) \bigr)
				& \text{if} \ b < \infty ,
	\end{cases}
\]
and
\[
	\mu(a,b) \
	\ge \ \exp \bigl( - 3 L \varphi'(a+)^{-2} \bigr) \cdot
	\begin{cases}
		\displaystyle
		\frac{1}{|\varphi'(a+)|}
			& \text{if} \ b = \infty , \\[2ex]
		(b - a) \nu \bigl( \varphi'(a+)(b - a) \bigr)
			& \text{if} \ b < \infty .
	\end{cases}
\]
\end{Proposition}

\begin{proof}[Proof of Proposition~\ref{prop:M.and.mu}]
As to part~(a), by means of Fubini's theorem we may write
\[
	\mu(a) \
	= \ (1 - F(a))^{-1} \int \int_0^\infty 1_{[r < y - a]} \, \d r \, P(\d y) \
	= \ \int_0^\infty \frac{1 - F(a+r)}{1 - F(a)} \, \d r
\]
for $a \in \R$ with $F(a) < 1$. For $a < a'$ with $F(a') = 1$,
\[
	\mu(a) - \mu(a') \
	= \ \mu(a) \
	= \ \int_0^{a'-a} \frac{1 - F(a+r)}{1 - F(a)} \, \d r \
	\in \ [0, a'-a] .
\]
For $a < a'$ with $F(a') < 1$,
\begin{align*}
	\mu(a) - \mu(a') \
	&= \ \int_0^\infty \frac{1 - F(a+r)}{1 - F(a)} \, \d r
		- \int_0^\infty \frac{1 - F(a'+r)}{1 - F(a')} \, \d r \\
	&\le \ \int_0^\infty \frac{1 - F(a+r)}{1 - F(a)} \, \d r
		- \int_0^\infty \frac{1 - F(a'+r)}{1 - F(a)} \, \d r \\
	&= \ \int_a^{a'} \frac{1 - F(a+r)}{1 - F(a)} \, \d r \
		\le \ a' - a .
\end{align*}
That $\mu(a) \ge \mu(a')$ was noted already by \cite{Bagnoli_Bergstrom_2005}, but for the reader's convenience, we provide an argument here. For $\xi \in \{a,a'\}$, one may write
\[
	\mu(\xi) \ = \ \int_{[0,\infty)} z f_\xi(z) \, \d z
\]
with the probability density $f_\xi$ on $[0,\infty)$ given by $f_\xi(z) \ := \ \exp \bigl( \varphi(\xi + z) - \log(1 - F(\xi)) \bigr)$. By concavity of $\varphi$, $f_a/f_{a'}$ is non-decreasing on $\{f_a > 0\} = [0,b_o-a) \supset \{f_{a'} > 0\}$. This implies that the distribution with density $f_a$ is stochastically greater than (or equal to) the distribution with density $f_{a'}$. In particular, the mean $\mu(a)$ of the former is not smaller than the mean $\mu(a')$ of the latter.

To prove part~(b), it suffices to consider the nontrivial case that $P(b,c) > 0$, i.e.\ $p \in (0,1)$. Note that the ratios $M(a,c)/M(a,b)$, $\mu(a,c)/\mu(a,b)$ and $p = P(a,b)/P(a,c)$ remain the same if we replace $P$ with the conditional distribution $P(a,c)^{-1} P(\cdot \cap (a,c))$. Thus we may assume that $P(a,c) = 1$, and we may replace $c$ with $\infty$. With a random variable $X$ with this (modified) distribution $P$, we may write
\[
	\frac{M(a,c)}{M(a,b)} \ = \ \frac{\Ex(X - a)}{\Ex(1_{[X \le b]} (X - a))} ,
	\quad
	\frac{\mu(a,c)}{\mu(a,b)} \ = \ \frac{\Ex(X - a)}{\Ex(X - a \,|\, X \le b)} \
		= \ \frac{p M(a,c)}{M(a,b)} .
\]
Now let $\tilde{f}(x) := 1_{[x > a]} \lambda \exp( - \lambda (x - a))$ with $\lambda > 0$ such that $\int_a^b \tilde{f}(x) \, \d x = p$, that is,
\[
	\lambda \ = \ \frac{- \log(1 - p)}{b - a} .
\]
By concavity of $\log f$ and linearity of $\log \tilde{f}$ on $(a,\infty)$, either $f \equiv \tilde{f}$, or there exist numbers $a \le x_1 < x_2 \le \infty$ such that
\[
	f \ \begin{cases}
		< \tilde{f} \ \text{on} \ (a,x_1) \cup (x_2,\infty) , \\
		> \tilde{f} \ \text{on} \ (x_1,x_2) .
	\end{cases}
\]
Note that $x_1 = a$ would imply that $f > \tilde{f}$ on $(a,b)$ or $f < \tilde{f}$ on $(b,\infty)$, and in both cases we would end up with $\int_a^b f(x) \, \d x \ne \int_a^b \tilde{f}(x) \, \d x$. Similarly one can exclude the cases $x_2 = \infty$, $x_1 \ge b$ and $x_2 \le b$. Consequently, we know that there exist constants $a < x_1 < b < x_2 < \infty$ such that
\[
	f \ \begin{cases}
		\le \ \tilde{f} \ \ \text{on} \ (a,x_1) \cup (x_2,\infty) , \\
		\ge \ \tilde{f} \ \ \text{on} \ (x_1,x_2) .
	\end{cases}
\]
In particular, if $\tilde{X}$ is a random variable with density $\tilde{f}$, then
\[
	\mathcal{L}(X \,|\, X \le b) \ \ge_{\rm st}^{} \ \mathcal{L}( \tilde{X} \,|\, \tilde{X} \le b)
	\quad\text{and}\quad
	\mathcal{L}(X \,|\, X > b) \ \le_{\rm st}^{} \ \mathcal{L}( \tilde{X} \,|\, \tilde{X} > b) ,
\]
where $\mathcal{L}(\cdot)$ stands for `distribution'. In particular,
\begin{align*}
	\frac{M(a,c)}{M(a,b)} \
	&= \ 1 + \frac{(1 - p) \Ex(X -a \,|\, X > b)}
		{p \Ex(X - a \,|\, X \le b)} \\
	&\le \ 1 + \frac{(1 - p) \Ex(\tilde{X} -a \,|\, \tilde{X} > b)}
		{p \Ex(\tilde{X} - a \,|\, \tilde{X} \le b)} \
	= \ \frac{\Ex(\tilde{X} - a)}{\Ex(1_{[\tilde{X} \le b]} (\tilde{X} - a))} .
\end{align*}
Elementary calculations reveal that the latter ratio is equal to $1 / [p + (1 - p)\log(1 - p)]$, which yields our upper bound for $M(a,c)/M(a,b)$. Concerning the lower bound, note that
\[
	\frac{M(a,c)}{M(a,b)} \
	= \ 1 + \frac{(1 - p) \Ex(X -a \,|\, X > b)}
		{p \Ex(X - a \,|\, X \le b)} \
	\ge \ 1 + \frac{(1 - p)}{p} \
	= \ \frac{1}{p} .
\]

To prove part~(c), we consider up to three functions $\psi_1, \psi_2, \psi_3 : [0,b-a] \to [-\infty,\infty]$ given by
\begin{align*}
	\psi_1(z) \
		&:= \ \varphi'(b-) z \qquad (\text{if} \ \varphi'(b_o-) > -\infty) , \\
	\psi_2(z) \
		&:= \ \varphi(a + z) - \varphi(a) , \\
	\psi_3(z) \
		&:= \ \varphi'(a+) z \qquad (\text{if} \ \varphi'(a+) < \infty) .
\end{align*}
Concavity of $\varphi$ implies that $\psi_1 \le \psi_2 \le \psi_3$, so
\[
	\frac{M(a,b)}{f(a)} \
	= \ \int_{(a,b)} (x - a) \exp(\psi_2(x - a)) \, \d x \
	= \ \int_0^{b-a} z \exp(\psi_2(z)) \, \d z
\]
lies between
\[
	\int_0^{b-a} z \exp(\psi_1(z)) \, \d z \
	= \ (b-a)^2 N \bigl( \varphi'(b-)(b-a) \bigr)
\]
and
\[
	\int_0^{b-a} z \exp(\psi_3(z)) \, \d z \
	= \ (b-a)^2 N \bigl( \varphi'(a+)(b-a) \bigr) .
\]
Analogously, one can show that $W(a,b)/f(a)$ lies between $(b-a)^2 V \bigl( \varphi'(b-)(b - a) \bigr)$ and $(b-a)^2 V \bigl( \varphi'(a+)(b-a) \bigr)$.

Concerning the inequalities for $\mu(a,b)$, note that in case of $\varphi'(b-) > -\infty$,
\[
	\psi_2(z) - \psi_1(z) \ = \ \int_0^z \bigl[ \varphi'((a + t)+) - \varphi'(b-) \bigr] \, \d t
\]
is non-decreasing in $z \in [0,b-a]$, so the probability distribution $P_1$ on $[0,b-a]$ with density proportional to $\exp(\psi_1)$ is stochastically smaller than (or equal to) the probability distribution $P_2$ on $[0, b-a]$ with density proportional to $\exp(\psi_2)$. Thus,
\[
	\mu(a,b) \ = \ \int_{[0,b-a]} z \, P_2(\d z) \
	\ge \ \int_{[0,b-a]} z \, P_1(\d z) \
	= \ (b - a) \nu \bigl( \varphi'(b-)(b-a) \bigr) .
\]
Analogously, if $\varphi'(a+) < \infty$, then $\psi_3 - \psi_2$ is non-decreasing on $[0,b-a]$, and this implies that
\[
	\mu(a,b) \ \le \ (b - a) \nu \bigl( \varphi'(a+)(b-a) \bigr) .
\]

Part~(d) is verified similarly as part~(c). Here we consider two or three probability densities $f_1, f_2, f_3$ on $[0,\infty)$ given by $f_1(z) := \lambda_1 \exp( - \lambda_1 z)$ with $\lambda_1 := - \varphi'(\infty-)$ (if $\varphi'(\infty-) > - \infty$), $f_2(z) := \exp \bigl( \varphi(a + z) - \log(1 - F(a)) \bigr)$ and $f_3(z) := \lambda_3 \exp(- \lambda_3 z)$ with $\lambda_3 := - \varphi'(a+)$. If $\varphi'(\infty-) > -\infty$, then $f_2/f_1$ is non-decreasing, whence $\mu(a) \ge 1/\lambda_1 = 1/|\varphi'(\infty-)|$. And $f_3/f_2$ is non-decreasing too, so $\mu(x) \le 1/\lambda_3 = 1/|\varphi'(a+)|$.

As to part~(e), it suffices to prove the inequalities for $M(a,b)$ and $\mu(a,b)$ with $b < \infty$, because $t := \varphi'(a+)(b - a) \to -\infty$ and $t^2 N(t) \to 1$, $|t| \nu(t) \to 1$ as $b \to \infty$, see Proposition~\ref{prop:N.and.nu}. For $z \in [0,b-a]$, let $\psi_2(z) = \varphi(a+z) - \varphi(a)$ as before and
\[
	\psi_4(z) \ := \ \varphi'(a+) z - L z^2/2 .
\]
The difference $\psi_2 - \psi_4$ is non-negative and non-decreasing, because
\[
	\psi_2(z) - \psi_4(z) \ = \ \int_0^z \bigl[ \varphi'(a+t) - \varphi'(a+) + L t \bigr] \, \d t ,
\]
and the integrand is non-negative by Lipschitz-continuity of $\varphi'$ on $(a,b)$ with constant $L$. Hence, $M(a,b)$ is not smaller than
\begin{align*}
	f(a) \int_0^{b-a} z \exp(\psi_4(z)) \, \d z \
	&= \ f(a) \int_0^{b-a} z \exp(\varphi'(a+) z - L z^2/2) \, \d z \\
	&= \ f(a) (b - a)^2 \int_0^1 u \exp \bigl( \varphi'(a+)(b - a) u - c u^2) \, \d u \\
	&= \ f(a) (b - a)^2 N \bigl( \varphi'(a+)(b-a) \bigr) \Ex \exp(- c U^2) ,
\end{align*}
where $c := L (b - a)^2/2$, and $U$ denotes a nonnegative random variable with density proportional to $1_{[u < 1]} u \exp( \varphi'(a+)(b - a) u)$. Similarly, $\mu(a,b)$ is not smaller than
\begin{align*}
	\int_0^{b-a} & z \exp(\psi_4(z)) \, \d z \Big/ \int_0^{b-a} \exp(\psi_4(z)) \, \d z \\
	&\ge \ \int_0^{b-a} z \exp(\varphi'(a+) z - L z^2/2) \, \d z
		\Big/ \int_0^{b-a} \exp(\varphi'(a+) z) \, \d z \\
	&= \ (b - a) \int_0^1 u \exp \bigl( \varphi'(a+)(b - a) u - c u^2) \, \d u
		\Big/ \int_0^{1} \exp(\varphi'(a+)(b-a) u) \, \d u \\
	&= \ (b - a) \nu \bigl( \varphi'(a+)(b - a) \bigr) \Ex \exp( - c U^2) .
\end{align*}
But the random variable $U$ is stochastically smaller than a gamma random variable $Y$ with shape parameter $2$ and rate parameter $|\varphi'(a+)|(b-a)$. Thus it follows from Jensen's inequality and this comparison that
\[
	\Ex \exp(-c U^2) \ \ge \ \exp(- c \Ex(U^2)) \
	\ge \ \exp(- c \Ex(Y^2)) \
	= \ \exp \bigl( - 3 L \varphi'(a+)^{-2} \bigr) ,
\]
because $\Ex(Y^2) \ = \ 6 \varphi'(a+)^{-2} (b - a)^{-2}$.
\end{proof}

%------------------------------------------------
\subsection{Exponential and maximal inequalities}
\label{subsec:exp}
%------------------------------------------------

In what follows, let $\Mnhat, \munhat, \Wnhat$ and $\Mnhatemp, \munhatemp, \Wnhatemp$ be defined as $M, \mu, W$ with $\Pnhat$ and $\Pnhatemp$, respectively, in place of $P$. The following basic result will be our key to bound the ratios $\Mnhatemp/M$ and $\munhatemp/\mu$.

\begin{Proposition}
\label{prop:basic.exp}
Let $Q$ be a distribution on $[0,\infty)$ with log-concave density and mean $\mu_Q > 0$. Consider an i.i.d.\ sample $Y_1,\ldots,Y_m$ drawn from $Q$. Then, for arbitrary $t < 1$,
\[
	\Ex \exp \Bigl( t \sum_{i=1}^m \frac{Y_i}{\mu_Q} \Bigr) \ \le \ (1 - t)_{}^{-m} .
\]
\end{Proposition}

\begin{proof}[Proof of Proposition~\ref{prop:basic.exp}]
If $Q$ follows an exponential distribution, it is well known that the above inequality holds with equality. Now suppose that $Q$ is an arbitrary distribution with log-density $\varphi$. Let $\tilde{\varphi}$ be the log-density of the exponential distribution $\tilde{Q}$ with mean $\mu_Q$, that is, $\tilde{\varphi}(x) = - x/\mu_Q - \log(\mu_Q)$ for $x \ge 0$ and $\tilde{\varphi}(x) = -\infty$ for $x < 0$. Concavity of $\varphi$ and linearity of $\tilde{\varphi}$ on $[0,\infty)$ together with equality of the means imply that for suitable real numbers $0 < a < b$, $\varphi(x) \le \tilde{\varphi}(x)$ for $x \not\in [a,b]$ and $\varphi(x) \ge \tilde{\varphi}(x)$ for $x \in (a,b)$. By Lemma~b in~\cite{Karlin_1963} (see also Theorem 3.A.44 in~\cite{Shaked_Shanthikumar_2007}), we obtain
\begin{equation}
\label{ineq:convex.order}
	\int \Psi \, \d Q \ \le \ \int \Psi \, \d \tilde{Q}
	\quad\text{for all convex} \ \Psi : \R \to \R .
\end{equation}
Applying \eqref{ineq:convex.order} to $\Psi(x) = \exp(tx/\mu_Q)$ for arbitrary $t < 1$, and using independence of $Y_1, \ldots, Y_m$ concludes the proof.
\end{proof}

Our next key results are simultaneous inequalities for the univariate versions of $\munhatemp/\mu$ and $\munhat/\mu$.

\begin{Proposition}
\label{prop:munhat(emp)}
{\bf (a)} \ For any $\tau > 1$, the probability that
\[
	\Bigl| \frac{\munhatemp(X_{(k)})}{\mu(X_{(k)})} - 1 \Bigr| \
	< \ \sqrt{ \frac{2\tau \log(n)}{n - k} } + \frac{\tau \log(n)}{n-k} \ \
	\text{for} \ k = 1,\ldots,n-1
\]
is at least $1 - 2 n^{1 - \tau}$.

\noindent
{\bf (b)} \ For arbitrary constants $b_n$ such that $b_n \to b_o$ and $(1 - F(b_n)) / \rho_n \to \infty$,
\[
	\max_{k < n : X_{(k)} \le b_n} \,
		\Bigl| \frac{\munhatemp(X_{(k)})}{\mu(X_{(k)})} - 1 \Bigr| \
	= \ \Op \bigl( \sqrt{\rho_n/(1 - F(b_n)} \bigr) ,
\]
\[
	\max_{x \in \Snhat : x \le b_n} \,
		\Bigl| \frac{\munhat(x)}{\mu(x)} - 1 \Bigr| \
	= \ \Op \bigl( \sqrt{\rho_n/(1 - F(b_n)} \bigr) .
\]
\end{Proposition}

\begin{proof}[Proof of Proposition~\ref{prop:munhat(emp)}]
As to part~(a), suppose first that $P$ is the exponential distribution with mean $\mu(0)$. Then Chernov's bound, applied to exponential distributions, shows that for $\eps \ge 0$,
\begin{equation}
\label{ineq:Chernov.exp}
	\Pr \biggl( \pm \Bigl( \frac{\munhatemp(0)}{\mu(0)} - 1 \Bigr) \ge \eps \biggr) \
	\le \ \exp[ - n H(\pm \eps)] ,
\end{equation}
where $H(t) := t - \log(1 + t)$ for $t > -1$ and $H(t) := \infty$ for $t \le -1$. 
Due to Proposition~\ref{prop:basic.exp} the Chernov bound \eqref{ineq:Chernov.exp} holds true for arbitrary distributions $P$ with log-concave density such that $a_o \ge 0$.
Coming back to the general case, note that for any $k \in \{1,\ldots,n-1\}$ and $a < b_o$, the conditional distribution of $(X_{(k + \ell)} - a)_{\ell = 1}^{n-k}$, given that $X_{(k)} = a$, coincides with the distribution of $(Y_{(\ell)})_{\ell=1}^{n-k}$, where $Y_{(1)} \le \cdots \le Y_{(n-k)}$ are the order statistics of $n -k$ independent random variables with density $f_a(y) := 1_{[y \ge 0]} f(a + y) /(1 - F(a))$. Since $\munhatemp(a)$ is the mean of $X_{(k + \ell)} - a$, $1 \le \ell \le n-k$, we may apply the inequalities \eqref{ineq:Chernov.exp} to deduce that
\begin{equation}
\label{ineq:Chernov.Xk}
	\Pr \biggl( \pm \Bigl( \frac{\munhatemp(X_{(k)})}{\mu(X_{(k)})} - 1 \Bigr) \ge \eps \biggr) \
	\le \ \exp[ - (n-k) H(\pm \eps)]
\end{equation}
for arbitrary $\eps \ge 0$. Since $H(-\eps) \ge H(\eps)$ for all $\eps \ge 0$, this implies that
\begin{equation}
\label{ineq:Chernov.Xk'}
	\Pr \biggl( \Bigl| \frac{\munhatemp(X_{(k)})}{\mu(X_{(k)})} - 1 \Bigr| \ge \eps \biggr) \
	\le \ 2 \exp[ - (n-k) H(\eps)] .
\end{equation}
Note that $H \bigl( \sqrt{2r} + r \bigr) \ge r$ for arbitrary $r \ge 0$, because $H \bigl( \sqrt{2r} + r \bigr) - r$ is equal to $\sqrt{2r} - \log \bigl( 1 + \sqrt{2r} + r \bigr)$, and $\exp \bigl( \sqrt{2r} \bigr) \ge 1 + \sqrt{2r} + \sqrt{2r}^2/2 = 1 + \sqrt{2r} + r$. Consequently,
\begin{align*}
	\Pr & \biggl( \Bigl| \frac{\munhatemp(X_{(k)})}{\mu(X_{(k)})} - 1 \Bigr|
		\ge \sqrt{ \frac{2\tau \log(n)}{n - k} } + \frac{\tau \log(n)}{n-k} \ \
		\text{for some} \ k \in \{1,\ldots,n-1\} \biggr) \\
	&\le \ \sum_{k=1}^{n-1}
		\Pr \biggl( \Bigl| \frac{\munhatemp(X_{(k)})}{\mu(X_{(k)})} - 1 \Bigr|
		\ge \sqrt{ \frac{2\tau \log(n)}{n - k} } + \frac{\tau \log(n)}{n-k} \biggr) \\
	&\le \ 2 \sum_{k=1}^{n-1} \exp( - \tau \log(n)) \
		< \ 2 n^{1 - \tau} .
\end{align*}

Concerning part~(b), note that $\log(n)/(n - k)$ equals $\rho_n / (1 - \Fnhatemp(X_{(k)})$, so combining part~(a) with \eqref{eq:tails.Fnhatemp} shows that the maximum of $\bigl| \munhatemp(X_{(k)})/\mu(X_{(k)}) - 1 \bigr|$ over all $k$ such that $X_{(k)} \le b_n$ is of order $\Op \bigl( \sqrt{\rho_n/(1 - F(b_n)} \bigr)$. Combining part~(a) with \eqref{eq:char.1}, \eqref{eq:tails.Fnhatemp} and \eqref{eq:tails.Fnhat} shows that the maximum of $\bigl| \munhat(x)/\mu(x) - 1 \bigr|$ over all $x \in \Snhat$ such that $x \le b_n$ is of order $\Op \bigl( \sqrt{\rho_n/(1 - F(b_n)} \bigr)$ too.
\end{proof}

Finally, we need some inequalities for the ratios $\Mnhatemp(a,b)/M(a,b)$, $\Mnhat(a,b)/M(a,b)$ over a broad range of pairs $(a,b)$.

\begin{Proposition}
\label{prop:Mnhat(emp)}
\textbf{(a)} \ For any real number $a$ and $b \in (a,\infty]$ such that $P(a,b) > 0$,
\[
	\Pr \biggl( \Bigl| \frac{\Mnhatemp(a,b)}{M(a,b)} - 1 \Bigr|
		\ge 2 \sqrt{\frac{\tau}{P(a,b)}} + \frac{\tau}{P(a,b)} \biggr) \
	\le \ 2 e_{}^{-n\tau}
\]
for all $\tau > 0$.

\noindent
\textbf{(b)} \ For any sequence of numbers $\delta_n \in (0,1)$ such that $\delta_n \to 0$ and $\delta_n/\rho_n \to \infty$, let $\AA_n := \bigl\{ (a,b) : -\infty < a < b \le \infty, P(a,b) \ge \delta_n \bigr\}$. Then
\[
	\sup_{(a,b) \in \AA_n} \,
		\Bigl| \frac{\Mnhatemp(a,b)}{M(a,b)} - 1 \Bigr| \ \top \ 0 .
\]
Moreover,
\[
	\sup_{(a,b) \in \AA_n : b \in \Snhat} \,
		\Bigl( \frac{\Mnhat(a,b)}{M(a,b)} - 1 \Bigr)^+ \ \top \ 0 .
\]
\end{Proposition}

\begin{proof}[Proof of Proposition~\ref{prop:Mnhat(emp)}]
As to part~(a), we first note that Proposition~\ref{prop:basic.exp} yields the inequality
\begin{align*}
	\Ex \exp \Bigl( t \frac{\Mnhatemp(a,b)}{M(a,b)} \Bigr) \
	&= \ \Ex \exp \Bigl( \frac{t}{n P(a,b)} \sum_{i=1}^n 1_{(a,b)}(X_i) \frac{X_i - a}{\mu(a,b)} \Bigr) \\
	&= \ \Ex \Ex \Bigl( \exp \Bigl( \frac{t}{n P(a,b)}
		\sum_{i=1}^n 1_{(a,b)}(X_i) \frac{X_i - a}{\mu(a,b)} \Bigr) \, \Big|\, \Pnhatemp(a,b) \Bigr) \\
	&\le \ \sum_{k=0}^\infty \binom{n}{k} P(a,b)^k (1 - P(a,b))^{n-k}
		\Bigl( 1 - \frac{t}{nP(a,b)} \Bigr)^{-k} \\
	&= \ \biggl( 1 + \frac{P(a,b) t}{nP(a,b) - t} \biggr)^n \\
	&\le \ \exp \Bigl( \frac{n P(a,b) t}{nP(a,b) - t} \Bigr) \
		= \ \exp \Bigl( t + \frac{t^2}{nP(a,b) - t} \Bigr)
\end{align*}
for arbitrary $t < nP(a,b)$, so
\[
	\Ex \exp \Bigl( t \Bigl( \frac{\Mnhatemp(a,b)}{M(a,b)} - 1 \Bigr) \Bigr) \
	\le \ \exp \Bigl( \frac{t^2}{nP(a,b) - t} \Bigr) .
\]
A standard application of Markov's inequality shows that for $\eta \ge 0$,
\[
	\Pr \biggl( \Bigl| \frac{\Mnhatemp(a,b)}{M(a,b)} - 1 \Bigr| \ge \eta \biggr) \
	\le \ 2 \exp \Bigl( \frac{t^2}{nP(a,b) - t} - t \eta \Bigr)
\]
for all $t \in [0, n P(a,b))$. This bound is minimized for $t = n P(a,b) \bigl( 1 - 1/\sqrt{1+ \eta} \bigr)$, which leads to
\[
	\Pr\biggl( \Bigl| \frac{\Mnhatemp(a,b)}{M(a,b)} - 1 \Bigr| \ge \eta \biggr) \
	\le \ 2\exp \Bigl( - n P(a,b) \bigl( \sqrt{1+\eta} - 1\bigr)^2 \Bigr) .
\]
Thus, for $\eta = 2 \sqrt{\tau / P(a,b)} + \tau/P(a,b)$ we obtain the asserted inequality.

Concerning part~(b), let $\DD_n = \bigl\{ j/n : j \in \mathbb{Z} \cap [-n^2, n^2] \bigr\} \cup \{\infty\}$. Then it follows from part~(a) that for $D > 4$,
\begin{align*}
	\Pr \biggl( & \Bigl| \frac{\Mnhatemp(a,b)}{M(a,b)} - 1 \Bigr| \ge
			2 \sqrt{ \frac{D \rho_n}{P(a,b)} } + \frac{D \rho_n}{P(a,b)} 
			\ \text{for some} \ a,b \in \DD_n, P(a,b) > 0 \biggr) \\
	&\le \ (2 n^2 + 1) 2 n^2 n^{-D} \ \to \ 0 .
\end{align*}
In particular,
\[
	\Delta_{n,M} \ := \ \max_{a,b \in \DD_n : P(a,b) \ge \delta_n/2} \,
		\Bigl| \frac{\Mnhatemp(a,b)}{M(a,b)} - 1 \Bigr| \
	\top \ 0 .
\]
Note also that by Proposition~\ref{prop:Pnhatemp},
\[
	\Delta_{n,P} \ := \ \max_{a,b \in \DD_n : P(a,b) \ge \delta_n/2} \,
		\Bigl| \frac{\Pnhatemp(a,b)}{P(a,b)} - 1 \Bigr| \
	\top \ 0 .
\]

For an arbitrary pair $(a,b) \in \AA_n$, let $(a',a'']$ and $(b',b'']$ be minimal intervals with endpoints in $\{-\infty\} \cup \DD_n$ containing $a$ and $b$, respectively. Note that $P(a',a'')$ and $P(b',b'')$ are not larger than
\[
	\gamma_n \ := \ \max \bigl( \{ P(c,c + 1/n) : c \in \R\} \cup \{P(-\infty,-n), P(n,\infty)\} \bigr) \
	= \ O(n^{-1}) ,
\]
because $P$ has a bounded density and subexponential tails. In particular, $P(a'',b') \ge \delta_n - 2 \gamma_n \ge \delta_n/2$ for sufficiently large $n$. On the one hand,
\[
	\frac{\Mnhatemp(a,b)}{M(a,b)} \
	\le \ \frac{\Mnhatemp(a,b'')}{M(a,b'')} \cdot \frac{M(a,b'')}{M(a,b)}
\]
and
\[
	\frac{\Mnhatemp(a,b)}{M(a,b)} \
	\ge \ \frac{\Mnhatemp(a,b')}{M(a,b')} \cdot \frac{M(a,b')}{M(a,b)} ,
\]
and by Proposition~\ref{prop:M.and.mu}~(b), the ratios $M(a,b'')/M(a,b)$ and $M(a,b)/M(a,b')$ are larger than one but not larger than $[p_n + (1 - p_n) \log(1 - p_n)]^{-1} \to 1$, where $p_n := (\delta_n - \gamma_n)^+/\delta_n \to 1$. On the other hand, it follows from Fubini's theorem that uniformly in $(a,b) \in \AA_n$ and for sufficiently large $n$,
\begin{align*}
	\frac{\Mnhatemp(a,b'')}{M(a,b'')} \
	&= \ \frac{\int_a^{a''} \Pnhatemp(x,b'') \, \d x + \Mnhatemp(a'',b'')}
		{\int_a^{a''} P(x,b'') \, \d x + M(a'',b'')} \\
	&\le \ \frac{(1 + \Delta_{n,P}) \int_a^{a''} P(x,b'') \, \d x + (1 + \Delta_{n,M}) M(a'',b'')}
		{\int_a^{a''} P(x,b'') \, \d x + M(a'',b'')} \\
	&\le \ 1 + \max\{\Delta_{n,P},\Delta_{n,M}\}
\end{align*}
and
\begin{align*}
	\frac{\Mnhatemp(a,b')}{M(a,b')} \
	&= \ \frac{\int_a^{a''} \Pnhatemp(x,b') \, \d x + \Mnhatemp(a'',b')}
		{\int_a^{a''} P(x,b'') \, \d x + M(a'',b'')} \\
	&\ge \ \frac{(1 - \Delta_{n,P}) \int_a^{a''} P(x,b') \, \d x + (1 - \Delta_{n,M}) M(a'',b')}
		{\int_a^{a''} P(x,b') \, \d x + M(a'',b')} \\
	&\ge \ 1 - \max\{\Delta_{n,P},\Delta_{n,M}\} .
\end{align*}
These considerations show that the supremum of $\bigl| \Mnhatemp(a,b) / M(a,b) - 1 \bigr|$ over all $(a,b) \in \AA_n$ converges to $0$ in probability.

Concerning the ratio $\Mnhat(a,b)/M(a,b)$, note first that for any probability distribution $Q$ with finite first moment and real numbers $a < b$,
\[
	\int (x - a)^+ \, Q(\d x) - \int (x - b)^+ \, Q(\d x) \ 
	= \ \int_{(a,b]} (x - a) Q(\d x) + (b - a) Q(b,\infty) .
\]
Consequently, it follows from \eqref{eq:char.1} and \eqref{eq:char.2} that
\[
	\lim_{b' \to b+} \Mnhatemp(a,b') \
	\ge \ \Mnhat(a,b) \quad\text{if} \ b \in \Snhat .
\]
This inequality implies the assertion about the ratio $\Mnhat(a,b)/M(a,b)$. 
\end{proof}

By symmetry the following results are an immediate consequence of Proposition~\ref{prop:Mnhat(emp)}.

\begin{Corollary}
\label{cor:Wnhat(emp)}
For any sequence of numbers $\delta_n \in (0,1)$ such that $\delta_n \to 0$ and $\delta_n/\rho_n \to \infty$, let $\tilde{\AA}_n := \bigl\{ (a,b) : -\infty \le a < b < \infty, P(a,b) \ge \delta_n \bigr\}$. Then
\[
	\sup_{(a,b) \in \tilde{\AA}_n} \,
		\Bigl| \frac{\Wnhatemp(a,b)}{W(a,b)} - 1 \Bigr| \ \top \ 0 .
\]
Moreover,
\[
	\sup_{(a,b) \in \tilde{\AA}_n : a \in \Snhat} \,
		\Bigl( \frac{\Wnhat(a,b)}{W(a,b)} - 1 \Bigr)^+ \ \top \ 0 .
\]
\end{Corollary}

%===================================
\section{Proofs of the main results}
\label{sec:Proofs}
%===================================

\begin{proof}[Proof of Theorem~\ref{thm0}]
Since $f$ and $\fnhat$ are zero on $\R \setminus (a_o,b_o)$, and because of \eqref{eq:consistency.f.ab}, it suffices to show that for fixed points $a_o < a < b < b_o$,
\[
	\sup_{x \in (a_o,a]} \, \bigl( \fnhat(x) - f(x) \bigr) \ \le \ L(a) + \op(1)
	\quad\text{and}\quad
	\sup_{x \in [b,b_o)} \, \bigl( \fnhat(x) - f(x) \bigr) \ \le \ R(b) + \op(1)
\]
with bounds $L(a), R(b)$ such that $L(a) \to 0$ as $a \downarrow a_o$ and $R(b) \to 0$ as $b \uparrow b_o$. For symmetry reasons we only consider the second claim. We fix an arbitrary $m \in (a_o,b_o)$ such that $\varphi'(m+) < 0$ in case of $\varphi'(b_o-) < 0$ and restrict our attention to $b \in (m,b_o)$. If $b_o < \infty$ and $\varphi'(b_o-) \ge 0$, then concavity of $\varphi$ and $\phinhat$ implies that
\begin{align*}
	\sup_{x \in [b,b_o)} \, \bigl( \fnhat(x) - f(x) \bigr) \
	&\le \ \sup_{x \in [b,b_o)} \, \fnhat(b) \exp \Bigl( \frac{\phinhat(b) - \phinhat(m)}{b-m} (x - b) \Bigr)
		- f(b) \\
	&\top \ f(b) \bigl( f(b)/f(m) \bigr)^{(b_o - b)/(b - m)} - f(b) \\
	&\le \ f(b_o) \bigl[ \bigl( f(b_o)/f(m) \bigr)^{(b_o - b)/(b - m)} - 1 \bigr] \ =: \ R(b) ,
\end{align*}
and $R(b) \to 0$ as $b \uparrow b_o$. If $\varphi'(b_o-) < 0$, then
\begin{align*}
	\sup_{x \in [b,b_o)} \, \bigl( \fnhat(x) - f(x) \bigr) \
	&\le \ \sup_{x \in [b,b_o)} \, \fnhat(b) \exp \Bigl( \frac{\phinhat(b) - \phinhat(m)}{b-m} (x - b) \Bigr)
		- f(b_o) \\
	&\top \ f(b) - f(b_o) \ =: \ R(b) ,
\end{align*}
because $\bigl( \phinhat(b) - \phinhat(m) \bigr) / (b - m) \top \bigl( \varphi(b) - \varphi(m) \bigr) / (b - m) \le \varphi'(m+) < 0$, and $R(b) \to 0$ as $b \uparrow b_o$. Here $f(\infty) := 0$.

Let $(b_n)_n$ be a sequence in $(a_o,b_o)$ with limit $b_o$. For arbitrary fixed $a_o < a < b < b_o$, it follows from concavity of $\phinhat$ that for sufficiently large $n$,
\[
	\phinhat'(b_n+) \ \le \ \frac{\phinhat(b) - \phinhat(a)}{b - a} \
	\top \ \frac{\varphi(b) - \varphi(a)}{b - a} .
\]
Now the assertion follows from the fact that the right-hand side converges to $\varphi'(b_o-)$ as $a,b \to b_o$.
\end{proof}

\begin{proof}[Proof of Theorem~\ref{thm:finite.b0}]
As to part~(a), it follows from \eqref{eq:consistency.f.ab} and Theorem~\ref{thm0} that for any fixed $b \in (a,b_o)$,
\begin{align*}
	\sup_{x \ge a} \, \bigl| \fnhat(x) - f(x) \bigr| \
	&\le \ \sup_{x \in [a,b]} \, \bigl| \fnhat(x) - f(x) \bigr|
		+ \sup_{x \ge b} \, \bigl| \fnhat(x) - f(x) \bigr| \\
	&\le \ \op(1) + \sup_{x \ge b} \, \bigl( \fnhat(x) - f(x) \bigr)^+
		+ \sup_{x \ge b} \, f(x) \\
	&= \ \op(1) + \sup_{x \ge b} \, f(x) ,
\end{align*}
and $\sup_{x \ge b} f(x) \to f(b_o) = 0$ as $b \uparrow b_o$. Furthermore, since $\varphi'(b_o-) = -\infty$, it follows from Theorem~\ref{thm0} that $\phinhat'(b_n+) \top \varphi'(b_o-)$.

As to part~(b), it follows from \eqref{eq:consistency.phi.ab} and Theorem~\ref{thm0} that for any fixed $b \in (a,b_o)$ and all $n$ with $b_n > b$,
\begin{align*}
	\sup_{x \in [a,b_n]} \, \bigl| \phinhat(x) - \varphi(x) \bigr| \
	&\le \ \sup_{x \in [a,b]} \, \bigl| \phinhat(x) - \varphi(x) \bigr|
		+ \sup_{x \in [b,b_n]} \, \bigl| \phinhat(x) - \varphi(x) \bigr| \\
	&\le \ \op(1)
		+ \sup_{x \in [b,b_o]} \, \bigl( \phinhat(x) - \varphi(x) \bigr)^+
		+ \sup_{x \in [b,b_n]} \, \bigl( \varphi(x) - \phinhat(x) \bigr)^+ \\
	&= \ \op(1)
		+ \sup_{x \in [b,b_n]} \, \bigl( \varphi(x) - \phinhat(x) \bigr)^+ ,
\end{align*}
where we used the fact that $\delta(b) := \min_{x \in [b,b_o]} f(x) > 0$, so $(\phinhat - \varphi)^+ \le (\fnhat - f)^+/\delta(b)$ on $[b,b_o]$. By concavity of $\phinhat$,
\begin{align*}
	\sup_{x \in [b,b_n]} \bigl( \varphi(x) - \phinhat(x) \bigr)^+
	&\le \sup_{x \in [b,b_n]} \bigl( \varphi(x) - \varphi(b_o) \bigr)^+
		+ \sup_{x \in [b,b_n]} \bigl( \varphi(b_o) - \phinhat(x) \bigr)^+ \\
	&= \sup_{x \in [b,b_o]} \, \bigl( \varphi(x) - \varphi(b_o) \bigr)^+
		+ \max_{x \in \{b,b_n\}} \, \bigl( \varphi(b_o) - \phinhat(x) \bigr)^+ \\
	&\le 2 \sup_{x \in [b,b_o]} \bigl| \varphi(x) - \varphi(b_o) \bigr|
		+ \bigl( \varphi(b) - \phinhat(b) \bigr)^+
		+ \bigl( \varphi(b_o) - \phinhat(b_n) \bigr)^+ \\
	&= 2 \sup_{x \in [b,b_o]} \bigl| \varphi(x) - \varphi(b_o) \bigr|
		+ \op(1)
		+ \bigl( \varphi(b_o) - \phinhat(b_n) \bigr)^+
\end{align*}
by \eqref{eq:consistency.phi.ab}. Since $\sup_{x \in [b,b_o]} \bigl| \varphi(x) - \varphi(b_o) \bigr| \to 0$ as $b \uparrow b_o$, it suffices to show that
\[
	\bigl( \varphi(b_o) - \phinhat(b_n) \bigr)^+ \
	\top \ 0 .
\]
To this end, we show that for any fixed $\eps > 0$, the inequality $\phinhat(b_n) \le \varphi(b_o) - \eps$ holds with asymptotic probability zero. Let $b(\eps) \in (a_o,b_o)$ such that $|\varphi - \varphi(b_o)| \le \lambda \eps/2$ on $[b(\eps),b_o]$ for some $\lambda \in (0,1)$ to be specified later. From \eqref{eq:consistency.phi.ab} and Theorem~\ref{thm0} we may conclude that $\phinhat(b(\eps)) \ge \varphi(b_o) - \lambda\eps$ and $\phinhat \le \varphi(b_o) + \lambda \eps$ on $[b(\eps),b_o]$ with asymptotic probability one. Thus it suffices to show that the event
\[
	A_{n,\eps} \ := \ \bigl[ \phinhat(b_n) \le \varphi(b_o) - \eps,
		\phinhat(b(\eps)) \ge \varphi(b_o) - \lambda\eps,
		\phinhat \le \varphi(b_o) + \lambda\eps \ \text{on} \ [b(\eps),b_o] \bigr]
\]
has asymptotic probability zero. From now on we assume that the event $A_{n,\eps}$ occurs. Suppose that $n$ is sufficiently large such that $b_n > b(\eps)$. Note that $\phinhat'(b_n+) < 0$, because $\phinhat(b(\eps)) > \phinhat(b_n)$. Let $Y_n$ be the largest point in $\Snhat \cap (a_o,b_n]$. Then, $\phinhat$ is affine on $[Y_n,b_n]$ and non-increasing on $[Y_n,b_o]$. Consequently, $\fnhat$ is convex on $[Y_n,b_n]$ and non-increasing on $[Y_n,b_o]$. 

Suppose first that $Y_n \ge b(\eps)$. Then the properties of $\fnhat$ on $[Y_n,b_o]$ imply that
\begin{align*}
	1 - \Fnhat(Y_n) \ = \ \Pnhat([Y_n,b_o]) \
	&\le \ (b_n - Y_n) \frac{\fnhat(Y_n) + \fnhat(b_n)}{2} + (b_o - b_n) \fnhat(b_n) \\
	&\le \ (b_o - Y_n) \frac{\fnhat(Y_n) + \fnhat(b_n)}{2} \\
	&\le \ (b_o - Y_n) f(b_o) \frac{e^{\lambda\eps} + e^{-\eps}}{2} ,
\end{align*}
whereas
\[
	1 - F(Y_n) \ = \ P([Y_n,b_o]) \
	\ge \ (b_o - Y_n) f(b_o) e^{-\lambda\eps/2} .
\]
Consequently, for sufficiently large $n$, the event $A_{n,\eps}$ implies that
\[
	\frac{1 - \Fnhat(Y_n)}{1 - F(Y_n)} \
	\le \ \frac{e^{3\lambda\eps/2} + e^{\lambda\eps/2 - \eps}}{2} \
	= \ 1 + (\lambda - 1/2) \eps + O(\eps^2)
\]
as $\eps \downarrow 0$. Hence, if $\lambda < 1/2$ and $\eps > 0$ is sufficiently small, it follows from \eqref{eq:tails.Fnhat} that the event $A_{n,\eps}$ has asymptotic probability zero.

Suppose that $Y_n \le b(\eps)$. Then,
\begin{align*}
	\Pnhat([b(\eps),b_o]) \
	&\le \ (b_n - b(\eps)) \frac{\fnhat(b(\eps)) + \fnhat(b_n)}{2} + (b_o - b_n) \fnhat(b_n) \\
	&\le \ (b_o - b(\eps)) f(b_o) \frac{e^{\lambda\eps} + e^{-\eps}}{2} ,
\end{align*}
whereas
\[
	P([Y_n,b_o]) \
	\ge \ (b_o - b(\eps)) f(b_o) e^{-\lambda\eps/2} .
\]
Consequently, for sufficiently large $n$, the event $A_{n,\eps}$ implies that
\begin{align*}
	(P - \Pnhat)([b(\eps),b_o]) \
	&\ge \ (b_o - b(\eps)) f(b_o)
		\Bigl( e^{-\lambda\eps/2} - \frac{e^{\lambda\eps} + e^{-\eps}}{2} \Bigr) \\
	&= \ (b_o - b(\eps)) f(b_o)
		\bigl( (1/2 - \lambda)\eps + O(\eps^2) \bigr)
\end{align*}
as $\eps \downarrow 0$. Hence, if $\lambda < 1/2$ and $\eps > 0$ is sufficiently small, it follows from \eqref{eq:consistencyTV} that the event $A_{n,\eps}$ has asymptotic probability zero.

As to part~(c), because of Theorem~\ref{thm0}, it suffices to show that for any fixed $\eps > 0$, the event $B_{n,\eps} := \bigl[ \phinhat'(b_n+) < \varphi'(b_o-) - \eps \bigr]$ has asymptotic probability zero. Let $Y_n$ be the largest point in $\Snhat$ such that $Y_n \le b_n$. It follows from \eqref{eq:consistency.phi.ab} that for any fixed $a\in (a_o,b_o)$, the event $B_{n,\eps} \cap [Y_n \le a]$ has asymptotic probability zero, because for any fixed $b \in (a,b_o)$,
\[
	\phinhat'(a+) \ \ge \ \frac{\phinhat(b) - \phinhat(a)}{b - a} \
	\top \ \frac{\varphi(b) - \varphi(a)}{b - a} \ \ge \ \varphi'(b_o-) ,
\]
whereas $\phinhat'(Y_n+) = \phinhat'(b_n+)$. Consequently, there exist numbers $a_{n,\eps} \in (a_o,b_n)$ such that $a_{n,\eps} \to b_o$ and $\Pr(B_{n,\eps} \cap [Y_n \le a_{n,\eps}]) \to 0$. It remains to be shown that $B_{n,\eps} \cap [Y_n >a_{n,\eps}]$ has asymptotic probability zero. 
Assuming that the latter event occurs, note that by Proposition~\ref{prop:M.and.mu}~(c),
\begin{align*}
	\mu(Y_n) \
		&\ge \ (b_o - Y_n) \nu \bigl( \varphi'(b_o-)(b_o - Y_n) \bigr) , \\
	\munhat(Y_n) \
		&\le \ (b_o - Y_n) \nu \bigl( (\varphi'(b_o-) - \eps)(b_o - Y_n) \bigr) ,
\end{align*}
and since the moduli of $\varphi'(b_o-) (b_o - Y_n)$ and $(\varphi'(b_o-) - \eps)(b_o - Y_n)$ are not larger than $\bigl( |\varphi'(b_o-)| + \eps \bigr) (b_o - a_{n,\eps}) \to 0$, we may conclude that
\begin{align*}
	\frac{\munhat(Y_n)}{\mu(Y_n)} - 1 \
	&\le \ \frac{1/2 + (\varphi'(b_o-) - \eps)(b_o - Y_n)/12 + O(1) (b_o - Y_n)^2}
		{1/2 + \varphi'(b_o-) (b_o - Y_n)/12 + O(1) (b_o - Y_n)^2} - 1 \\
	&= \ - \frac{\eps(b_o - Y_n)/6 + O(1)(b_o - Y_n)^2}
		{1 + \varphi'(b_o-) (b_o - Y_n)/6 + O(1) (b_o - Y_n)^2} \\
	&= \ - (\eps + o(1)) (b_o - Y_n)/6 \\
	&\le \ - (\eps + o(1)) (b_o - b_n)/6 \\
	&= \ - \bigl( \eps/f(b_o) + o(1) \bigr) (1 - F(b_n)) ,
\end{align*}
uniformly on $B_{n,\eps} \cap [Y_n > a_{n,\eps}]$. On the other hand, we know from Proposition~\ref{prop:munhat(emp)}~(b) that
\[
	\frac{\munhat(Y_n)}{\mu(Y_n)} - 1 \
	\ge \ \Op \bigl( \sqrt{\rho_n / (1 - F(b_n))} \bigr) \
	= \ \op(1 - F(b_n)) ,
\]
because $\rho_n/(1 - F(b_n))^3 \to 0$ by assumption, whence $\Pr(B_{n,\eps} \cap [Y > a_{n,\eps}]) \to 0$.
\end{proof}

\begin{proof}[Proof of Theorem~\ref{thm:infinite.b0}]
Concerning part~(a), since $\{\fnhat > 0\} = [X_{(1)},X_{(n)}]$,
\[
	\Pr(\fnhat(b_n) = 0) \
	= \ \Pr(X_{(1)} > b_n) + \Pr(X_{(n)} < b_n) \
	= \ (1 - F(b_n))^n + F(b_n)^n \ \to \ 0 ,
\]
because $F(b_n) \to 1$ and $n(1 - F(b_n)) \to \infty$. 
If $\varphi'(\infty-) = -\infty$, it follows already from Theorem~\ref{thm0} that $\phinhat'(b_n+) \top \varphi'(\infty-)$. Otherwise, we know that $\phinhat'(b_n+) \le \varphi(\infty-) + \op(1)$, and it suffices to show that for any fixed $\eps > 0$, the inequality $\phinhat'(b_n+) \le \varphi'(\infty-) - \eps$ holds true with asymptotic probability zero. 
If $ \phinhat'(b_n+) \le \varphi'(\infty-) - \eps$, then it follows from Proposition~\ref{prop:M.and.mu}~(d) that $Y_n := \max(\Snhat \cap (a_o,b_n])$, satisfies
\begin{align*}
	\mu(Y_n) \
		&\ge \ -1/\varphi'(\infty-) , \\
	\munhat(Y_n) \
		&\le \ -1 / \phinhat'(Y_n+) \ = \ -1 / \phinhat'(b_n+) \ \le \ -1/(\varphi'(\infty-) - \eps) ,
\end{align*}
whence
\[
	\frac{\munhat(Y_n)}{\mu(Y_n)} \
	\le \ \frac{\varphi'(\infty-)}{\varphi'(\infty-) - \eps} \
	= \ \bigl( 1 + \eps / |\varphi'(\infty-)| \bigr)^{-1} .
\]
According to Proposition~\ref{prop:munhat(emp)}~(b), the latter inequality holds true with asymptotic probability zero.

To prove the claim in part~(b), recall that for any compact interval $I \subset (a_o,\infty)$,
\[
	\Delta_n(I) \ := \ \max_{x \in I} \, \bigl| \phinhat(x) - \varphi(x) \bigr| \ \top \ 0 .
\]
Consequently, there exists a sequence $(a_n)_n$ of numbers $a_n \in [a,b_n]$ converging to $\infty$ such that even $\Delta_n[a,a_n] \top 0$. Hence, it suffices to show that
\[
	\max_{x \in [a_n,b_n]} \, \frac{ \bigl| \phinhat(x) - \varphi(x) \bigr|}{1 + |\varphi(x)|} \
	\top \ 0 .
\]
We may assume without loss of generality that $\varphi(a_n) \le 0$ and $\varphi'(a_n+) < 0$. Applying part~(a) to $(b_n)_n$ and to $(a_n)_n$ in place of $(b_n)_n$ implies that
\[
	\Gamma_n \ := \ \max_{t \in [a_n,b_n]} \, \bigl| \phinhat'(t+)/\varphi'(t+) - 1 \bigr| \
	\top \ 0 .
\]
Thus, uniformly in $x \in [a_n,b_n]$,
\begin{align*}
	\bigl| \phinhat(x) - \varphi(x) \bigr| \
	&\le \ \bigl| \phinhat(a_n) - \varphi(a_n) \bigr| +
		\int_{a_n}^x \bigl| \phinhat'(t+) - \varphi'(t+) \bigr| \, \d t \\
	&\le \ \op(1) + \Gamma_n \int_{a_n}^x |\varphi'(t+)| \, \d t \\
	&= \ \op(1) + \Gamma_n \bigl( \varphi(a_n) - \varphi(x) \bigr) \\
	&\le \ \op(1) + \Gamma_n |\varphi(x)| .
\end{align*}

Now we prove the claims in part~(c). If $a < b < \infty$ and and $0 < \delta < a - a_o$, then for $x \in [a,b]$,
\begin{align*}
	\phinhat'(x+) - \varphi'(x) \
	&\le \ \frac{\phinhat(x) - \phinhat(x-\delta)}{\delta} - \varphi'(x) \\
	&\le \ 2 \Delta_n[a-\delta,b] / \delta
		+ \frac{1}{\delta} \int_{x-\delta}^x \bigl( \varphi'(s) - \varphi'(x) \bigr) \, \d s \\
	&\le \ 2 \Delta_n[a-\delta,b] / \delta
		+ L \delta/2 ,
\end{align*}
where $L$ denotes the Lipschitz constant of $\varphi'$ on $(a_*,\infty)$. Analogously,
\[
	\phinhat'(x+) - \varphi'(x) \
	\ge \ - 2 \Delta_n[a,b+\delta] / \delta
		- L \delta/2 ,
\]
Hence, $\max_{x \in [a,b]} \bigl| \phinhat'(x+) - \varphi'(x) \bigr| \top 0$. Consequently, there exist sequences $(a_n)_n$ and $(\eps_n)_n$ of numbers $a_n \in [a,b_n]$ and $\eps_n > 0$ such that $a_n \to \infty$, $\eps_n \to 0$, and with asymptotic probability one,
\begin{equation}
\label{ineq:aan}
	\max_{x \in [a,a_n]} \, \bigl| \phinhat(x) - \varphi(x) \bigr| \ < \ \eps_n ,
	\quad
	\sup_{x \in [a,a_n]} \, \bigl| \phinhat'(x+) - \varphi'(x) \bigr| \ < \ \eps_n .
\end{equation}
Consequently, it suffices to verify the claims of part~(c) with $[a_n,b_n]$ in place of $[a,b_n]$.

At first we show that
\begin{equation}
\label{ineq:sup.deriv}
	\Gamma_n \ := \ \sup_{x \in [a_n,b_n]} \, \Bigl( \frac{\phinhat'(x+)}{\varphi'(x)} - 1 \Bigr)^+ \
	\top \ 0 .
\end{equation}
It follows from Proposition~\ref{prop:M.and.mu}~(e,d,a,d) that
\[
	\sup_{x \in [a_n,b_n]} \, \frac{\phinhat'(x+)}{\varphi'(x)} \
	= \ (1 + o(1)) \sup_{x \in [a_n,b_n]} \, \mu(x) |\phinhat'(x+)| \
	\le \ (1 + o(1)) \max_{y \in \Snhat : y \le b_n} \, \frac{\mu(y)}{\munhat(y)} ,
\]
because for any $x \in [a_n,b_n]$, the point $y := \max(\Snhat \cap (a_o,x])$ satisfies $y \le x$ and $\phinhat'(y+) = \phinhat'(x+) \le \phinhat'(a_n+) < 0$ with asymptotic probability one, whence
\[
	\mu(x) |\phinhat'(x+)| \ \le \ \mu(y) |\phinhat'(x+)| \
	= \ \mu(y) |\phinhat'(y+)| \ \le \ \mu(y)/\munhat(y) .
\]
Now \eqref{ineq:sup.deriv} follows from the fact that the maximum of $\mu/\munhat$ over $\Snhat \cap (a_o,b_n]$ equals $1 + \op(1)$, see Proposition~\ref{prop:munhat(emp)}~(b).

Secondly, with \eqref{ineq:sup.deriv} at hand, the claim about $(\phinhat - \varphi)^+/(1 + |\varphi|)$ can be verified easily. Uniformly in $x \in [a_n,b_n]$,
\begin{align*}
	\varphi(x) - \phinhat(x) \
	&= \ \varphi(a_n) - \phinhat(a_n) +
		\int_{a_n}^x \bigl( \varphi'(t) - \phinhat'(t+) \bigr) \, \d t \\
	&\le \ \eps_n + \Gamma_n \int_{a_n}^x |\varphi'(t)| \, \d t \\
	&= \ \eps_n + \Gamma_n \bigl( \varphi(a_n) - \varphi(x) \bigr) \\
	&\le \ \op(1) \bigl( 1 + |\varphi(x)| \bigr) .
\end{align*}

Finally, we derive an upper bound for $(\phinhat - \varphi)^+$ on $[a_n,b_n]$. To this end we augment the boundary $a_n$ by a number $a_n' \in [a,a_n]$ such that
\[
	\varphi'(a_n') \ = \ \min\{\varphi'(a), \varphi'(a_n) + 1\} .
\]
Since $\varphi'(a_n') \to -\infty$, the sequence $(a_n')_n$ converges to $\infty$ too. Moreover, assuming that \eqref{ineq:aan} holds true, as soon as $\varphi'(a_n') = \varphi'(a_n) + 1$ and $\eps_n < 1/2$,
\[
	\phinhat'(a_n'+) - \phinhat'(a_n+) \
	\ge \ 1 - 2 \eps_n \ > \ 0 ,
\]
so
\begin{equation}
\label{eq:anp.an}
	\Pr \bigl( \Snhat \cap [a_n', a_n] \ne \emptyset) \ \to \ 1 .
\end{equation}

For any $x \ge a$ let
\[
	y(x) \ := \ x + |\varphi'(x)|^{-1} .
\]
This defines an increasing function $y : [a,\infty) \to (a,\infty)$ such that
\begin{equation}
\label{eq:Pxyx}
	\frac{P(x,y(x))}{P(x,\infty)} \ \to \ 1 - e^{-1} \quad\text{as} \ x \to \infty .
\end{equation}
Indeed,
\begin{align*}
	\frac{P(x,y(x))}{P(x,\infty)} \
	&= \ \int_0^{y(x) - x} \exp \bigl( \varphi(x+s) - \log \bar{F}(x) \bigr) \, \d s \\
	&\ge \ |\varphi'(x)| \int_0^{y(x) - x} \exp(- |\varphi'(x)| s) \, \d s \\
	&= \ 1 - e^{-1} ,
\end{align*}
because the distribution with density $s \mapsto \exp \bigl( \varphi(x+s) - \log \bar{F}(x) \bigr)$ on $(0,\infty)$ is stochastically smaller than the exponential distribution with rate $|\varphi'(x)|$. On the other hand, since $-|\varphi'(x)|s \ge \varphi(x+s) - \varphi(x) \ge - |\varphi'(x)| s - L s^2/2$ for $s \ge 0$,
\begin{align*}
	\frac{P(x,y(x))}{P(x,\infty)} \
	&\le \ \int_0^{y(x) - x} \exp \bigl( - |\varphi'(x)|s \bigr) \, \d s
		\Big/ \int_0^\infty \exp \bigl( - |\varphi'(x)| s - L s^2/2 \bigr) \, \d s \\
	&= \ (1 - e^{-1}) \Big/ \int_0^\infty e^{-y} \exp(- L \varphi'(x)^{-2} y^2/2 \bigr) \, \d y \\
	&\le \ \exp(L \varphi'(x)^{-2}) (1 - e^{-1})
\end{align*}
by Jensen's inequality.

Now we define $b_n' := y(b_n)$ and $b_n'' := y(b_n')$. Then it follows from \eqref{eq:Pxyx} that
\[
	\frac{1 - F(b_n'')}{\rho_n} \ = \ (e^{-2} + o(1)) \frac{1 - F(b_n)}{\rho_n} \ \to \ \infty
\]
Hence, \eqref{ineq:sup.deriv} remains valid with $[a_n',b_n'']$ in place of $[a_n,b_n]$. Furthermore, \eqref{eq:Pxyx} implies that
\[
	\inf_{x \in [a_n',b_n']} \frac{P(x,y(x))}{\rho_n} \
	= \ (1 - e^{-1} + o(1)) \frac{1 - F(b_n')}{\rho_n} \
	\to \ \infty .
\]
Consequently, by Corollary~\ref{cor:Wnhat(emp)},
\[
	\max_{x \in [a_n',b_n'] \cap \Snhat} \, \frac{\Wnhat(x,y(x))}{W(x,y(x))} \
	\le \ 1 + \op(1) .
\]
On the other hand, Proposition~\ref{prop:M.and.mu}~(c) implies that for $x \in [a_n',b_n']$,
\begin{align*}
	\Wnhat(x,y(x)) \
	&\ge \ \fnhat(x) \varphi'(x)^{-2} V \bigl( - \phinhat'(y(x)+) /\varphi(x) \bigr) , \\
	W(x,y(x)) \
	&\le \ f(x) \varphi'(x)^{-2} V(-1) ,
\end{align*}
because $\varphi' < 0$ on $[a,\infty)$, and it follows from \eqref{ineq:sup.deriv} with $[a_n',b_n'']$ in place of $[a_n,b_n]$, Lipschitz-continuity of $\varphi'$ on $[a,\infty)$ with constant $L$ and Proposition~\ref{prop:N.and.nu} that
\begin{align*}
	\inf_{x \in [a_n',b_n']} \,
		\frac{V \bigl( - \phinhat'(y(x)+) / \varphi(x) \bigr)}{V(-1)} \
	&\ge \ \inf_{x \in [a_n',b_n']} \,
		\frac{V \bigl( - (1 + \op(1)) \varphi'(y(x))/\varphi'(x) \bigr)}{V(-1)} \\
	&\ge \ \frac{V \bigl( - (1 + \op(1)) (1 + L \varphi'(a_n')^{-2}) \bigr)}{V(-1)} \\
	&= \ 1 + \op(1) ,
\end{align*}
because $\varphi'(y(x))/\varphi'(x) \ge - 1 - L \varphi'(x)^{-2} \ge -1 - L \varphi'(a_n')^{-2} \to -1$. Consequently,
\[
	\max_{x \in [a_n',b_n'] \cap \Snhat} \, \frac{\fnhat(x)}{f(x)} \ \le \ 1 + \op(1) ,
\]
which is equivalent to
\[
	\max_{x \in [a_n',b_n'] \cap \Snhat} \, \bigl( \phinhat(x) - \varphi(x) \bigr)^+ \
	= \ \op(1) .
\]
Since $\phinhat - \varphi$ is convex between consecutive knots in $\Snhat$, and since $\Snhat \cap [a_n',a_n] \ne \emptyset$ with asymptotic probability one by \eqref{eq:anp.an}, this implies that
\[
	\max_{x \in [a_n,\hat{b}_n]} \, \bigl( \phinhat(x) - \varphi(x) \bigr)^+ \
	= \ \op(1) ,
\]
where $\hat{b}_n := \max(\Snhat \cap (-\infty, b_n'])$. This proves already our claim in case of $\hat{b}_n \ge b_n$. So it remains to show that
\[
	\frac{\fnhat(b_n)}{f(b_n)} \ \le \ 1 + \op(1)
	\quad\text{if} \ \Snhat \cap [b_n,b_n'] = \emptyset.
\]
With $\check{b}_n := \min(\Snhat \cap [b_n',\infty))$, the inequality $\hat{b}_n \le b_n$ implies that $\phinhat$ is linear on $[b_n,\check{b}_n]$, and $P(b_n,\check{b}_n)/\rho_n \ge P(b_n,b_n')/\rho_n \ge (1 - e^{-1} + o(1)) (1 - F(b_n))/\rho_n \to \infty$. Hence Proposition~\ref{prop:Mnhat(emp)}~(b) implies that
\[
	\frac{\Mnhat(b_n,\check{b}_n)}{M(b_n,\check{b}_n)} \ \le \ 1 + \op(1) .
\]
On the other hand, Proposition~\ref{prop:M.and.mu}~(c,e) implies that
\begin{align*}
	\Mnhat(b_n,\check{b}_n) \
	&\ge \ \fnhat(b_n) (\check{b}_n - b_n)^2 N \bigl(  \phinhat'(b_n+) (\check{b}_n - b_n) \bigr) , \\
	M(b_n,\check{b}_n) \
	&\le \ f(b_n) (\check{b}_n - b_n)^2 N \bigl(  \varphi'(b_n) (\check{b}_n - b_n) \bigr)  ,
\end{align*}
and it follows from \eqref{ineq:sup.deriv}, Lipschitz-continuity of $\varphi'$ on $[a,\infty)$ with constant $L$ and Proposition~\ref{prop:N.and.nu} that
\[
	\frac{N \bigl(  \phinhat'(b_n+) (\check{b}_n - b_n) \bigr)}
		{N \bigl(  \varphi'(b_n) (\check{b}_n - b_n) \bigr)} \
	\ge \ \frac{N \bigl(  (1 + \op(1)) \varphi'(b_n) (\check{b}_n - b_n) \bigr)}
		{N \bigl(  \varphi'(b_n) (\check{b}_n - b_n) \bigr)} \
	= \ 1 + \op(1) .
\]
The latter conclusion follows from the fact that for arbitrary sequences $(t_n)_n$ in $(-\infty,0]$, $N \bigl( (1 + o(1)) t_n \bigr) / N(t_n) = 1 + o(1)$. For a bounded sequence $(t_n)_n$ this follows from continuity of $N$. If $t_n \to -\infty$, this follows from the expansion $N \bigl( (1 + o(1)) t_n \bigr) = (1 + o(1)) t_n^{-2}$.
\end{proof}

\paragraph{Acknowledgement.}
Part of this work was supported by the Swiss National Science Foundation.

%\bibliographystyle{ims}
%\bibliography{References}

%%%%%%%%%%%%%%
\end{document}